\documentclass[12pt]{amsart}
\usepackage{latexsym,fancyhdr,amssymb,color,amsmath,amsthm,graphicx,listings,comment}
\pagestyle{fancy}
\newtheorem{conj}{Question} 
\newtheorem{thm}{Theorem}       \newtheorem{propo}{Proposition}
\newtheorem{lemma}{Lemma}       \newtheorem{coro}{Corollary}
\usepackage[section]{placeins}  \setlength{\textwidth}{15cm}    \let\paragraph\subsection
\setlength{\oddsidemargin}{5mm} \setlength{\evensidemargin}{5mm}\setlength{\parindent}{0mm}
\setlength{\topmargin}{-5mm}    \setlength{\headheight}{15mm}   \setlength{\textheight}{22cm}
\def\B#1#2{{#1\choose #2}}

\title{The average simplex cardinality of a finite abstract simplicial complex}
\fancyhead{}
\fancyhead[LO]{\fontsize{9}{9} \selectfont OLIVER KNILL}
\fancyhead[LE]{\fontsize{9}{9} \selectfont AVERAGE SIMPLEX CARDINALITY OF SIMPLICIAL COMPLEXES}
\author{Oliver Knill} \date{5/5/2019}
\address{Department of Mathematics \\ Harvard University \\ Cambridge, MA, 02138 }
\subjclass{05Cxx, 05Exx, 68Rxx, 54F45, 55U10}


\begin{document}

\begin{abstract}
We study the average simplex cardinality 
${\rm Dim}^+(G) = \sum_{x \in G} |x|/(|G|+1)$
of a finite abstract simplicial complex $G$. It also defines
an average dimension ${\rm Dim}(G)={\rm Dim}^+(G)-1$. 
The functional ${\rm Dim}^+$ is a homomorphism from the monoid of simplicial complexes 
$\mathcal{G}$ to $\mathbb{Q}$: the formula
${\rm Dim}^+(G \oplus H) = {\rm Dim}^+(G) + {\rm Dim}^+(H)$ holds for the join 
$\oplus$ similarly as for the augmented inductive dimension 
${\rm dim^+}(G) = {\rm dim}(G)+1$ where ${\rm dim}$ is the inductive dimension
(as recently shown by Betre and Salinger \cite{BetreSalinger}). 
In terms of the generating function $f(t) = 1+v_0 t + v_1 t^2 + \cdots +v_d t^{d+1}$
defined by the f-vector $(v_0,v_1, \dots, v_d)$ of $G$ for which $f(-1)$ is 
genus $1-\chi(G)$ and $f(1)$ $={\rm card}(G) = |G|+1$ is the augmented number 
of simplices, the average cardinality is the logarithmic derivative 
${\rm Dim}^+(f) =  f'(1)/f(1)$ of $f$ at $1$.
So, $e^{{\rm dim^+}(G)}$, $e^{{\rm Dim}^+(G)}$, 
$e^{\rm max(G)}$, ${\rm genus}(G)=1-\chi(G)=f(-1)$, 
or ${\rm card}^+(G)=f(1)$ are all multiplicative characters from 
$\mathcal{G}$ to $\mathbb{R}$.  
After introducing the average cardinality and establishing its compatibility 
with arithmetic, we prove two results: 1) the inequality 
${\rm dim}^+(G)/2 \leq {\rm Dim}^+(G)$ with equality for complete complexes. 
2) In the Barycentric limit, $C_d=\lim_{n \to \infty} {\rm Dim}^+(G_n)$ 
is the same for any initial complex $G_0$ of maximal dimension $d$ and
the constant $C_d$ is explicitly given in terms of the Perron-Frobenius eigenfunction
of the universal Barycentric refinement operator $f_{G_{n+1}} = A f_{G_n}$ and is for 
positive $d$ always a rational number in the open interval $((d+1)/2,d+1)$. 
\end{abstract}

\maketitle

\section{Dimensions}

\paragraph{}
A finite set $G$ of non-empty sets closed under the operation of taking finite non-empty
subsets is called an {\bf abstract finite simplicial complex}. The sets $x$ in $G$ are also called 
{\bf simplices} or (if the complex is the Whitney complex of a graph, the sets are also called 
{\bf cliques} and the Whitney complex is known as clique complex). 
The {\bf dimension} of a set $x \in G$ is $|x|-1$, one less than the cardinality 
$|x|$ of the simplex $x$. 

\paragraph{}
One sometimes also looks at the {\bf augmented complex}
$G^+ = G \cup \{\emptyset\}$ which is a set of sets closed under the operation of taking arbitrary subsets. 
The {\bf maximal cardinality} ${\rm max}(G)$ is by one larger than the {\bf maximal
dimension} of the complex. A finite abstract simplicial complex $G$ has the {\bf maximal dimension}
${\rm max}_{x \in G} {\rm dim}(x)$, and the {\bf inductive dimension}
$1+(1/|G|) \sum_{x \in G} {\rm dim}(S(x))$, where
$S(x)=\{ y \in G \; | \;  y \subset x$ or $x \subset y\}$ is the {\bf unit sphere} of $x \in G$.
These dimensions are defined for arbitrary sets of sets not only for simplicial complexes. 

\paragraph{}
The inductive dimension of $G$ is a rational number less or equal than the maximal dimension of $G$. 
It behaves nicely with respect to the Cartesian product because 
of the inequality ${\rm dim}(G \times H) \geq {\rm dim}(G) + {\rm dim}(H)$
for the Cartesian product \cite{KnillKuenneth} (the Barycentric refinement of the 
Cartesian set product $G \times H$), 
which holds similarly also for the Hausdorff dimension does \cite{Falconer} (formula 7.2) 
for Borel sets in Euclidean space. 

\paragraph{}
Recently, in \cite{BetreSalinger} it was proven for the inductive dimension ${\rm dim}$ that the functional
${\rm dim}^+(G) = {\rm dim}(G) + 1$ is additive:
$$  {\rm dim}^+(G \oplus H) = {\rm dim}^+(G) + {\rm dim}^+(H) \; . $$ 
This is a bit surprising as it is an identity, where one has rational numbers on both sides.

\begin{figure}[!htpb]
\scalebox{0.22}{\includegraphics{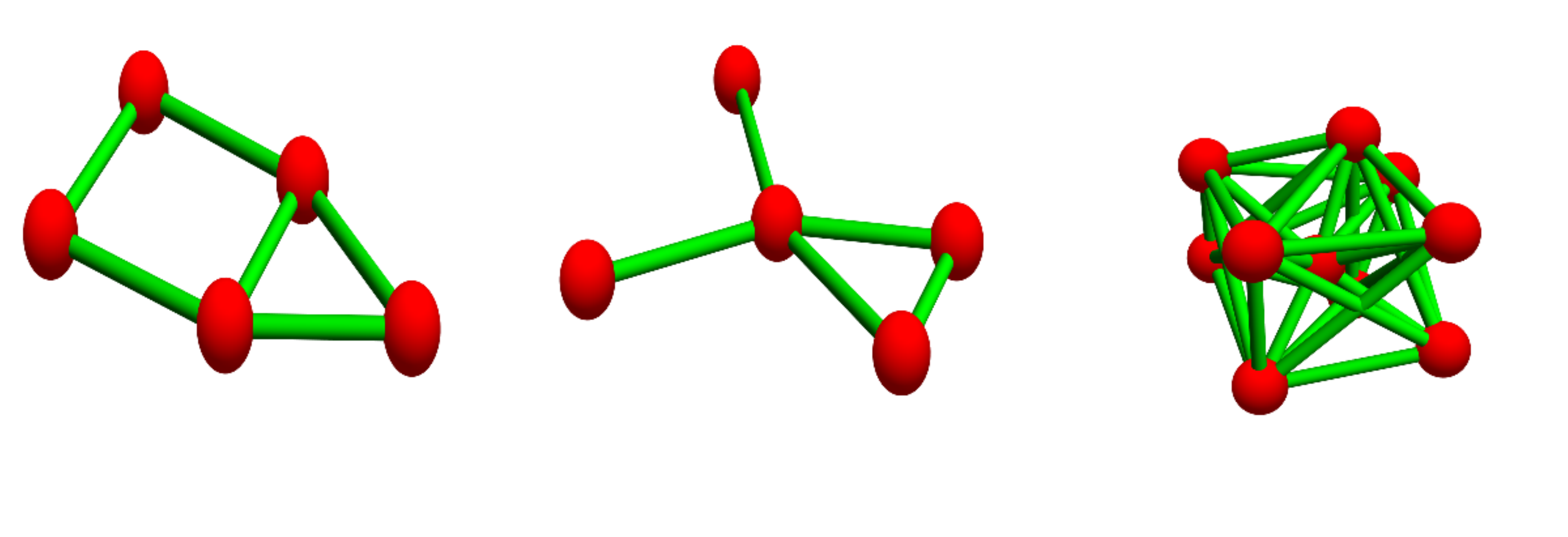}}
\label{betresalinger}
\caption{
An illustration of the compatibility equality telling that the augmented dimension
of $G \oplus H$ is the sum of the augmented dimensions of  $G$ and $H$. 
The first graph is the ``house graph", the second the ``rabbit graph".
The average simplex cardinalities are ${\rm Dim}^+(G) = 20/13$, 
${\rm Dim}^+(H)=3/2$, ${\rm Dim}^+(G \oplus H)=79/26=20/13+3/2$. The
augmented inductive dimensions are 
${\rm dim}^+(G)=37/15$, ${\rm dim}^+(H)=5/2$ and 
${\rm dim}^+(G \oplus H)= 149/30=37/15+5/2$. The augmented maximal dimensions
are ${\rm max}(G) = 3$, ${\rm max}(H) = 3$ and ${\rm max}(G \oplus H) = 6=3+3$. 
}
\end{figure}

\paragraph{}
We introduce and study here the {\bf average dimension} 
$$   {\rm Dim}(G)=\frac{1}{|G|+1} \sum_{x \in G} {\rm dim}(x) $$
and its augmentation 
$$   {\rm Dim}^+(G)=\frac{1}{|G|+1} \sum_{x \in G} |x| \; , $$   
which we call the {\bf average simplex cardinality} of simply the {\bf average cardinality} 
of the simplicial complex $G$.
It measures the expected size $|x|$ of a set $x \in G$. For a triangle 
$K_3^+=\{ \{1,2,3\},\{1,2\},\{1,3\},\{2,3\},\{1\},\{2\},\{3\},\emptyset \}$ 
for example, the average simplex cardinality is 
$(1 \cdot 3 + 3 \cdot 2 + 3 \cdot 1+1 \cdot 0)/8 = 12/8=3/2$. 

\paragraph{}
The use of $|G|+1$ instead of $|G|$ in the denominator makes the arithmetic nicer. 
One can justify it also by looking at the {\bf augmented complex} $G^+$ which includes the empty set $\emptyset$ 
in $G$ and assign it to have dimension $|\emptyset|-1 = (-1)$ which has cardinality or augmented dimension $0$ 
so that it has no weight when taking the expectation but which adds $1$ to the cardinality of the sets in $G$.
In other words $|G|+1=|G^+|$ can explain the augmentation.

\paragraph{}
The presence of the empty set $\emptyset$ is also reflected in the use of the {\bf simplex generating function}
$$ f_G (t) = 1+v_0 t + \cdots + v_d t^{d+1} \; , $$
where $v_k$ counts the number of $k$-dimensional simplices in $G$. 
The constant term in $f$ can then be interpreted as $1 v_{-1}$, where $v_{-1}=1$ counts the number of
$(-1)$-dimensional sets (the empty set $\emptyset$). We prefer to stick here however to the more traditional definition in 
topology and call ``simplices" the non-empty 
sets in $G$ and see the {\bf empty set} as a $(-1)$ dimensional {\bf simplicial complex with no elements} and 
{\bf not as a $(-1)$-dimensional set in a simplicial complex}.

\paragraph{}
The {\bf join} of two simplicial complexes $G,H$ is defined as the complex
$$    G \oplus H = G \cup H \cup \{ x=y \cup z \; | y \in G, z \in H  \} \; . $$
Its graph is the {\bf Zykov join} of the graphs $\Gamma(G)$ and $\Gamma(H)$ \cite{Zykov},
which is the graph obtained by taking the disjoint union of the vertex sets of $G$ and $H$ and connecting 
any vertex in $G$ with any vertex in $H$. The Zykov join is defined for all graphs, they don't need to
come from a simplicial complex. 

\paragraph{}
The {\bf join monoid} $(\mathcal{G},\oplus)$ has the class of spheres as a sub-monoid for 
which the augmented dimension ${\rm Dim}^+(G$) is additive; this is all completely equivalent to 
standard topology, where the same identities hold. Also as in the continuum, 
the join with a $1$-point graph is called a {\bf cone extension} and
the join with the $2$-point graph (the 0-sphere) is the {\bf suspension}. 

\paragraph{}
From this point of view, also the {\bf genus} $f(-1) = 1-\chi(G)$ 
(which is sometimes called the {\bf augmented Euler characteristic}) is a valid 
alternative to the Euler characteristic $\chi(G)$ as it is compatible with the arithmetic: 
it multiplies with the join operation. 

\paragraph{}
Still, as Euler characteristic and simplicial complexes without empty set are 
far more entrenched outside of combinatorial topology, we prefer to stick to{\bf simplicial 
complex} and {\bf Euler characteristic}, as custom in topology. 
We will see in a moment that the average cardinality is the logarithmic derivative of $f$ at $1$. This immediately 
implies compatibility of ${\rm Dim}^+(G)$ with the join operation $\oplus$, facts which are trivial 
for the maximal dimension ${\rm max}(G)$ and which was recently shown to hold for the augmented inductive dimension 
${\rm dim}^+$ by Betre and Salinger \cite{BetreSalinger}.

\paragraph{}
The average dimension ${\rm Dim}$ is most likely of {\bf less topological importance} than 
the inductive dimension but instead of {\bf more statistical interest}. 
Both the maximal dimension ${\rm max}$ as well as the inductive dimension ${\rm dim}$ 
are more intuitive than the average dimension ${\rm Dim}$, as they align with dimension notions we are 
familiar with in the continuum. Inductive dimension is an integer for discrete manifolds for example. 
The average dimension however gives a richer statistical description of a network as tells about the
distribution of the various simplices. 

\paragraph{}
The fact that discrete manifolds have integer dimensions is a bit more general even: 
a simplicial complex $G$ can inductively be declared to
be a {\bf discrete $d$-variety} if every unit sphere $S(x)$ is a discrete $(d-1)$-variety. 
The class {\bf $d$-varieties} is more basic than $d$-manifolds as no homotopy is involved 
in its definition. For {\bf discrete d-manifolds}, we insist
that the unit spheres $S(x)$ are all $(d-1)$-spheres, which are defined to be $(d-1)$-manifolds 
which when punctured become contractible. The figure $8$-graph is an example of a $1$-variety which is not 
a discrete $1$-manifold, because one of its vertices has a unit sphere which is not a $0$-sphere.

\paragraph{}
The inductive dimension of ``discrete variety" is booted up with the assumption that the {\bf empty
complex} $0$ is a discrete variety. It is obvious from the definition that both maximal dimension
as well as inductive dimensions of a variety are integers and that they agree with the maximal dimension. 
But this is not true for the average cardinality in general. 
The octahedron graph and  icosahedron graph both have inductive dimension $2$.
The average cardinality of the octahedron graph is $2$, the average cardinality of the icosahedron
graph is $44/21=2.095..$. The Barycentric refinement of the octahedron graph has
average cardinality $314/147=2.136...$ while the Barycentric refinement of the icosahedron graph 
has average cardinality $782/363=2.15...$. In dimension 2, the limiting average cardinality is
$(1,3,2) \cdot (1,2,3)/(1+3+2)=16/6=2.1666 ...$ as $(1,3,2)$ is the dominant eigenvector of the 
Barycentric operator 
$A=\left[ \begin{array}{ccc} 1 & 1 & 1 \\ 0 & 2 & 6 \\ 0 & 0 & 6 \\ \end{array} \right]$ in 
dimension $2$. 

\paragraph{}
{\bf Examples:} for the complete graph $K_n$, the cyclic graph $C_n$ and the path graphs $P_n$ 
and the complete bipartite graphs $K_{n,n}$ or the point graphs $E_n=1+1 \cdots + 1$ with $n$ vertices 
and no edges: \\
${\rm Dim}^+(E_n)=n/(n+1)$         \hfill ${\rm dim}^+(E_n)=1$. \\
${\rm Dim}^+(K_n)=n/2$             \hfill ${\rm dim}^+(K_n)=n$. \\
${\rm Dim}^+(C_n)=3n/(2n+1)$       \hfill ${\rm dim}^+(C_n)=2$. \\
${\rm Dim}^+(P_n)=(3n-2)/(2n-1)$   \hfill ${\rm dim}^+(P_n)=2$. \\
${\rm Dim}^+(K_{3,3}) = 3/2$       \hfill ${\rm dim}^+(K_{3,3})=2$. \\
${\rm Dim}^+(K_{n,n}) = 2-2/(1+n)$ \hfill ${\rm dim}^+(K_{n,n})=2$. \\

\paragraph{}
For a one-dimensional simplicial complex with Euler characteristic $\chi=|V|-|E|=n-m$, we have
${\rm Dim}^+(G)=(|V|+2 |E|)/(|V|+|E|+1) = (3n-2\chi)/(2n-\chi+1)$. We see from this
in particular that in the Barycentric limit of a one-dimensional complex we get the average simplex cardinality 
value $3/2$ in dimension $d=1$. This can be expressed with the dominant eigenvector of the 
Barycentric refinement operator, the eigenvector to the maximal eigenvalue $d!$. 
The algebraically confirmation will be done in full generality; but in this particular case of 
dimension $d=1$, the Barycentric refinement operator
$A=\left[ \begin{array}{cc} 1 & 1 \\ 0 & 2 \\ \end{array} \right]$ has the 
{\bf Perron-Frobenius eigenvector}
$(1,1)$ and $(1,1)\cdots (1,2)/(1+1)=3/2$. 

\paragraph{}
Remarkable for the utility graph $K_{3,3}$ is that {\bf its average simplex dimension does not change
under Barycentric refinements}. It remains $3/2$. This is why we can not claim ${\rm Dim}^+(G_1) > {\rm Dim}^+(G)$ in 
general. It also is not the case for the zero-dimensional case $n=1 + 1 + \cdots + 1$ but there are no
two or higher dimensional simplicial complexes $G$ for which ${\rm Dim}^+(G_1) = {\rm Dim}^+(G)$. 
In dimension $d=2$, where the limiting Barycentric limit dimension is $23/8=2.875$, we would 
need a complex of which the vertex,
edge and triangle cardinalities $v_0,v_1,v_2$ would have to satisfy $(v_0+2 v_1 + 3 v_2)/(1+v_0+v+1+v_2)=23/8$
which implies $v_2 = 23 + 15 v_0 + 7 v_1$ which is not possible as $v_2 \leq v_0$.
In one-dimension we just need a graph with $v_1 = 3 +v_0$ to assure $(v_0+2 v_1)/(1+v_0+v_1) = 3/2$. There are many 
one-dimensional simplicial complexes beside the utility graph for which this happens. 

\section{Dimension expectation} 

\paragraph{}
If $G$ is a finite abstract simplicial complex of maximal dimension $d$, its {\bf $f$-vector of $G$} is
$(v_0,v_1,\cdots, v_d)$, where $v_k$ is the number of $k$-dimensional simplices in $G$. 
If $f(t)=1+v_0 t + v_1 t^2 + \cdots + v_d t^{d+1}$ is the {\bf simplex generating function}, then 
the {\bf average simplex cardinality} is defined as 
$$  {\rm Dim}^+(G)=\sum_{x \in G} \frac{{\rm dim}(x)+1}{|G|+1} = \frac{f'(1)}{f(1)} \; . $$

\paragraph{}
We are also interested in the logarithm $g(t) = \log(f(t))$ because
$g_G(-1) = \log(1-\chi(G))$ is the logarithm of the genus which is additive
and because $g_G'(1) = {\rm Dim}^+(G)$ is the average simplex cardinality.
The rational function 
$$  g'(t)= \frac{f'(t)}{f(t)}   $$ 
is therefore of interest: 

\begin{lemma}
$g'=f'/f: \mathcal{G} \to \mathbb{Q}(t)$ is an additive homomorphism:
$$ g_{G \oplus H} (t) = g_G(t) + g_H(t) $$
For any $a>0$, the functional $G \to f_G(a)/f'_G(a)$ is an additive 
homomorphism from the join-monoid of simplicial complexes to $\mathbb{Q}$. 
\end{lemma}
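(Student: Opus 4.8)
The plan is to reduce everything to a single multiplicative identity for the simplex generating function, namely
\[
  f_{G \oplus H}(t) = f_G(t)\, f_H(t),
\]
and then to use that a logarithmic derivative turns products into sums. First I would rewrite the generating function as a sum over the augmented complex $G^+ = G \cup \{\emptyset\}$: a simplex $y$ of dimension $|y|-1$ contributes the monomial $t^{|y|}$, and the empty set contributes the constant term $t^0$, so that $f_G(t) = \sum_{y \in G^+} t^{|y|}$. The join is built so that the vertex sets $V(G)$ and $V(H)$ are disjoint and every element of $(G \oplus H)^+$ is a union $y \cup z$ with $y \subseteq V(G)$ and $z \subseteq V(H)$; thus $(y,z) \mapsto y \cup z$ is a bijection $G^+ \times H^+ \to (G \oplus H)^+$ with inverse $w \mapsto (w \cap V(G),\, w \cap V(H))$. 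Because the union is disjoint, $|y \cup z| = |y| + |z|$, and expanding the product gives
\[
  f_G(t)\, f_H(t) = \sum_{y \in G^+}\sum_{z \in H^+} t^{|y|+|z|}
                  = \sum_{w \in (G\oplus H)^+} t^{|w|} = f_{G \oplus H}(t).
\]

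With this in hand both assertions are immediate. Writing $g = \log f$, the identity $f_{G \oplus H} = f_G f_H$ gives $g_{G \oplus H} = \log(f_G f_H) = g_G + g_H$, which is the first claim; differentiating (equivalently, applying the product rule directly) yields the additivity of the logarithmic derivative $g' = f'/f$ as an identity in $\mathbb{Q}(t)$:
\[
  \frac{f'_{G \oplus H}}{f_{G \oplus H}}
  = \frac{f'_G f_H + f_G f'_H}{f_G f_H}
  = \frac{f'_G}{f_G} + \frac{f'_H}{f_H}.
\]
For the evaluation statement I would observe that every coefficient of $f_G$ is a nonnegative integer with constant term $1$, so $f_G(a) > 0$ for every $a > 0$; hence evaluation at $t = a$ is defined on each of these rational functions and respects addition. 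Applying it to the displayed identity shows that the logarithmic derivative $G \mapsto f'_G(a)/f_G(a)$ is additive and takes values in $\mathbb{Q}$ whenever $a \in \mathbb{Q}$ (this is the quantity whose reciprocal appears in the statement). Specializing to $a = 1$ recovers $g'_G(1) = f'_G(1)/f_G(1) = {\rm Dim}^+(G)$ and hence the additivity of the average cardinality announced in the introduction.

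The only real content is the generating-function identity, so the main obstacle is the bookkeeping there: one must verify that $(y,z) \mapsto y \cup z$ is genuinely a bijection between $G^+ \times H^+$ and $(G\oplus H)^+$, and in particular that the three pieces $G$, $H$, and $\{\,y \cup z : y \in G,\, z \in H\,\}$ in the definition of $\oplus$ correspond exactly to the cases where $z$, $y$, or neither is empty, with the empty set of the join matching the pair $(\emptyset,\emptyset)$. Once disjointness of the two vertex sets is used to secure both $|y \cup z| = |y|+|z|$ and the uniqueness of the decomposition, the remainder is the formal behaviour of the logarithm and of evaluation at a point where the denominator does not vanish.
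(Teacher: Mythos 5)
Your proof is correct and follows the paper's own route: the paper likewise derives everything from the multiplicativity $f_{G \oplus H} = f_G f_H$ (which it asserts without the bijection argument on $G^+ \times H^+$ that you supply) and secures evaluation at $a>0$ by noting that a polynomial with positive coefficients has all roots of $f$ and $f'$ negative, where you instead use positivity of the coefficients directly --- an equivalent justification. You were also right to read the displayed functional $f_G(a)/f'_G(a)$ as a typo for its reciprocal $f'_G(a)/f_G(a)$: only the logarithmic derivative is additive, which is exactly what the paper's proof in fact establishes.
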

\begin{proof}
If follows immediately from $f_{G \oplus H} = f_G f_H$ which implies that
any derivative of $\log(f_G)$ is additive. \\
The second statement follows from the fact that if $f$ is a polynomial with 
positive entries, then all roots of $f$ and $f'$ are negative. 
\end{proof}

\paragraph{}
It follows that we can relate the value of $g$ on a {\bf unit ball} $B(x)$
with the value of the {\bf unit sphere} $S(x)$ as $B(x) = S(x) \oplus K_1$. 

\begin{coro}[Unit sphere and Unit ball dimension]
$g_{B(x)}(t) = g_{S(x)}(t) + 1/(1+t)$. Especially 
$$  {\rm Dim}^+(B(x)) = {\rm Dim}^+(S(x)) + 1/2 \; . $$
\end{coro}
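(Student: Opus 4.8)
The plan is to obtain the Corollary as an immediate consequence of the additivity Lemma, exploiting that a unit ball is the join of the corresponding unit sphere with a single point. The only input beyond the Lemma is the relation $B(x) = S(x) \oplus K_1$ recorded in the paragraph just above the statement, together with the elementary fact that the logarithmic derivative $f'/f$ (which the Corollary denotes $g$) converts the join $\oplus$ into addition.

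First I would compute the generating function of the one-point complex $K_1$. Since $K_1$ has a single vertex and no higher-dimensional simplices, its $f$-vector is $(1)$ and $f_{K_1}(t) = 1+t$, so its logarithmic derivative is
$$ g_{K_1}(t) = \frac{f_{K_1}'(t)}{f_{K_1}(t)} = \frac{1}{1+t} . $$
Applying the additivity from the Lemma to the decomposition $B(x) = S(x) \oplus K_1$ then gives
$$ g_{B(x)}(t) = g_{S(x)}(t) + g_{K_1}(t) = g_{S(x)}(t) + \frac{1}{1+t} , $$
which is the first claim. For the second claim I would specialize to $t=1$: since ${\rm Dim}^+(G) = f_G'(1)/f_G(1) = g_G(1)$ and $f_{K_1}(1)=2 \neq 0$ so no pole is hit, evaluating the displayed identity at $t=1$ yields
$$ {\rm Dim}^+(B(x)) = {\rm Dim}^+(S(x)) + \frac{1}{1+1} = {\rm Dim}^+(S(x)) + \frac{1}{2} . $$

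The one step that is not purely formal is the combinatorial identity $B(x) = S(x) \oplus K_1$, which I would take as given from the preceding paragraph; its content is that the unit ball around $x$ is the cone over the unit sphere $S(x)$ with apex $x$. Unwinding the join definition, $S(x) \oplus K_1$ consists of $S(x)$, the apex point $\{x\}$, and all sets $y \cup \{x\}$ with $y \in S(x)$, which is exactly the collection of simplices in the closed neighborhood of $x$. Everything else is bookkeeping: confirming $f_{K_1}(t)=1+t$ and keeping the notation straight, namely that in the Corollary $g$ is the logarithmic derivative $f'/f$ from the Lemma rather than $\log f$, so that its value at $t=1$ is the average cardinality ${\rm Dim}^+$. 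I do not expect any genuine obstacle, since the whole argument is a one-line application of the homomorphism property once the cone identity is in hand.
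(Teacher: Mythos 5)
Your proposal is correct and is essentially the paper's own argument: the paper's one-line proof is exactly $f_{B(x)}(t) = f_{S(x)}(t)(1+t)$ followed by additivity of the logarithmic derivative, which is your computation with $f_{K_1}(t)=1+t$ made slightly more explicit. Your side remarks are also apt, in particular flagging that in the Corollary $g$ must be read as $f'/f$ rather than $\log f$ (otherwise the correction term would be $\log(1+t)$, not $1/(1+t)$), a notational slip the paper leaves implicit.
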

\begin{proof}
$f_{B(x)}(t) = f_{S(x)}(t) (1+t)$
and $g_{B(x)}(1) = g_{S(x)}(1) + 1/2$. 
\end{proof}

\paragraph{}
We can evaluate the rational function at some points to get additive homomorphisms.
For example $G \to f_G'(-1)/f_G(-1)$ is an additive homomorphism. It is sort of a {\bf super
cardinality average} as the degree $f(-1)=1-\chi(G)$ is sort of a super cardinality of the 
complex and $f'(-1)$ counts up the simplices with sign. But $f'(-1)/f(-1)$ is
only defined for complexes $G$ which do not have Euler characteristic $\chi(G)=1$. 
Evaluating $f'(t)/f(t)$ for positive $t$ however is never a problem as all roots of $f$
are negative (as this holds for any polynomial with non-negative entries).  

\section{Examples}

\paragraph{}

\begin{propo}
$$  {\rm Dim}^+(K_n) = \frac{n}{2} = \frac{{\rm dim}^+(K_n)}{2} \; .  $$
If the equality ${\rm Dim}^+(G) = {\rm dim}^+(G)/2$ holds, then $G$ is $K_n$. 
\end{propo}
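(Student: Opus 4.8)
The plan is to treat the two assertions separately. For the value of $\mathrm{Dim}^+(K_n)$ I would exploit that the complete complex is an iterated join $K_n = K_1 \oplus \cdots \oplus K_1$ of $n$ copies of the one-point complex. Since $f_{K_1}(t) = 1+t$, one reads off $\mathrm{Dim}^+(K_1) = f'_{K_1}(1)/f_{K_1}(1) = 1/2$ and $\mathrm{dim}^+(K_1) = 1$. The join-additivity of $\mathrm{Dim}^+$ (the logarithmic-derivative form of Lemma 1) together with the additivity of $\mathrm{dim}^+$ established by Betre and Salinger then immediately give $\mathrm{Dim}^+(K_n) = n/2$ and $\mathrm{dim}^+(K_n) = n$, hence $\mathrm{Dim}^+(K_n) = n/2 = \mathrm{dim}^+(K_n)/2$. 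Equivalently one checks $f_{K_n}(t) = (1+t)^n$ directly and computes $f'(1)/f(1) = n 2^{n-1}/2^n = n/2$.

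For the converse I would prove the stronger inequality $\mathrm{Dim}^+(G) \ge \mathrm{dim}^+(G)/2$ for all $G$, with equality only for complete complexes, by induction on the maximal dimension $d$. The base case $d=0$ is $G=E_n$, where $\mathrm{Dim}^+ = n/(n+1) \ge 1/2 = \mathrm{dim}^+/2$ with equality exactly when $n=1$. For the step I start from the defining recursion $\mathrm{dim}^+(G) = 1 + |G|^{-1}\sum_{x\in G}\mathrm{dim}^+(S(x))$. Each unit sphere $S(x)$ is a complex of dimension strictly less than $d$, so the induction hypothesis gives $\mathrm{dim}^+(S(x)) \le 2\,\mathrm{Dim}^+(S(x))$. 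Substituting this and then applying Corollary 1 in the form $\mathrm{Dim}^+(S(x)) = \mathrm{Dim}^+(B(x)) - 1/2$ (which comes from $B(x) = S(x)\oplus K_1$) makes the additive constants cancel and leaves
$$\frac{\mathrm{dim}^+(G)}{2} \le \frac{1}{|G|}\sum_{x\in G}\mathrm{Dim}^+(B(x)).$$

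It therefore remains to prove the averaging inequality
$$\frac{1}{|G|}\sum_{x\in G}\mathrm{Dim}^+(B(x)) \le \mathrm{Dim}^+(G),$$
asserting that the average cardinality of $G$ dominates the simplex-average of the average cardinalities of its unit balls. This is the step I expect to be the main obstacle, because it is \emph{not} true term by term: attaching a long path to $K_4$ produces interior vertices $v$ with $\mathrm{Dim}^+(B(v)) > \mathrm{Dim}^+(G)$, so the statement cannot be reduced to a monotonicity of $\mathrm{Dim}^+$ under passage to subcomplexes and is genuinely a global cancellation. I would attack it by expanding both sides through the incidence double sum $\sum_x\sum_{y\in B(x)}|y|$, using the symmetry $y\in B(x)\iff x\in B(y)$, and controlling the varying denominators $|B(x)|+1$; a convexity or rearrangement argument on the $f$-vectors of the $B(x)$ is the alternative I would try. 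A clean fact that guides the equality analysis is that $B(\sigma)=G$ for a top-dimensional simplex $\sigma$ already forces $G=K_{|\sigma|}$, since the closed star of a maximal simplex is exactly its own face set.

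Finally I would trace equality back through the induction. If $\mathrm{Dim}^+(G) = \mathrm{dim}^+(G)/2$, then the first displayed inequality is an equality, so by the induction hypothesis every unit sphere $S(x)$, and in particular every vertex link, is a complete complex; this forces each connected component of $G$ to be a clique, i.e.\ $G$ is a disjoint union of complete complexes. A direct computation (equivalently, the equality discussion of the averaging inequality applied to a top simplex of one component) shows that a disjoint union of two or more cliques satisfies $\mathrm{Dim}^+(G) > \mathrm{dim}^+(G)/2$ strictly, so $G$ must be a single complete complex $K_n$.
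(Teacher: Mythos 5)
The first half of your proposal is exactly the paper's proof: the paper also writes $K_n = K_1 \oplus \cdots \oplus K_1$, reads off $f_{K_n}(t)=(1+t)^n$, and computes $f'(1)/f(1)=n/2$ with ${\rm dim}^+(K_n)=n$. Your plan for the converse likewise follows the paper's route (it is the proof of Theorem~1 together with Lemma~2 there): induction on the maximal dimension, substitution of the inductive definition ${\rm dim}^+(G)=1+|G|^{-1}\sum_{x}{\rm dim}^+(S(x))$, the identity ${\rm Dim}^+(B(x))={\rm Dim}^+(S(x))+\tfrac12$ from $B(x)=S(x)\oplus K_1$, and an equality trace at the end. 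But there is a genuine gap: the averaging inequality
$$ \frac{1}{|G|}\sum_{x \in G} {\rm Dim}^+(B(x)) \;\leq\; {\rm Dim}^+(G) \; , $$
which is the paper's Lemma~2a in the form ${\rm Dim}^+(G) \geq \tfrac12 + {\rm E}_G[{\rm Dim}^+]$ and which you yourself identify as carrying the entire weight of the converse, is never proved. You name two strategies you \emph{would try} (an incidence double sum exploiting $y\in B(x)\Leftrightarrow x\in B(y)$ with control of the denominators $|B(x)|+1$, or a convexity/rearrangement argument on the $f$-vectors of the balls), but neither is carried out; your own correct observation that the inequality fails term by term (interior vertices of a path glued to $K_4$ have ${\rm Dim}^+(B(v))>{\rm Dim}^+(G)$) shows the step is not routine, so leaving it as a sub-lemma leaves the converse unproven.

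For comparison, the paper closes exactly this step by an elementary probabilistic argument: view $G$ as a finite probability space carrying the random variable $X(y)=|y|$, note that the unit balls $B(x)$ form a covering, and apply an averaging (conditional expectation) inequality for coverings, with the augmented denominators (cardinality $+1$ rather than cardinality) supplying strictness; equality then forces the conditional averages $j \mapsto {\rm E}_G[X|A_j]$ to be constant, which is what pins down $G=K_n$ (Lemma~2b). Your final equality analysis --- all links complete forces each component to be a clique, and a disjoint union of two or more cliques gives a strict inequality --- is consistent with the paper's conclusion and is fine as supplementary detail, but it only becomes a proof once the averaging lemma is established. Two smaller remarks: the paper's stated proof of the proposition itself proves only the displayed equality and silently delegates the converse to the Theorem~1 machinery, so you were right that the converse needs the full inequality apparatus; and the paper's own write-up of the covering inequality is quite terse, so if you do complete your incidence-double-sum approach rigorously you would in fact be supplying a detail the paper glosses over --- but as submitted, the decisive inequality is an announced intention, not an argument.
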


\begin{proof}
For complete graphs $K_n$, we have $f'(t)/f(t) = n/(1+t)$ because $f_G(x) = (1+t)^n$
as $K_n = K_1 \oplus K_1 \cdots \oplus K_1$ is the join of $n$ one-point graphs $K_1$. 
We therefore have $f'(1)/f(1)=n/2$. The inequality follows from 
${\rm dim}^+(K_n)=n$.
\end{proof} 

\paragraph{}
The {\bf cross polytopes} $S^d$ are $d$-spheres which can be written as an iterated suspension 
$2 \oplus 2 \cdots \oplus 2$, where $G=2$ is the $0$-sphere, which is the zero-dimensional complex
with $2$ vertices. For $d=1$, the cross-polytop is $C_4$ and for $d=2$, it is the octahedron. 
For the $0$-sphere $2$, we have $f(t)=1+2t$ so that $f'(1)/f(1)=2/3$. 
Therefore, the average dimension is $f'(t)/f(t) = (d+1) 2/(1+2t)$. 
This gives for the $0$-sphere $2/3$. For $1$-sphere $4/3$. \\

\paragraph{}
It follows quite directly from the {\bf Dehn-Sommerville relations} that 
for any even dimensional sphere the function $f$ always has a root at $x=-1/2$
while the other roots pair up to $-1$ (we will explore this a bit more elsewhere).
For every odd-dimensional sphere in particular, the poles of $f$ pair up to $-1$. 
For us, the Dehn-Sommerville topic is both a subject of excitement as well as 
of disappointments as we have met manifestations of it being under the impression to 
``discover" it: a first case was the fact that the eigenvectors of the transpose $A^T$ 
of the Barycentric refinement operator $A_d$ define functionals which lead to quantities which 
must be zero for manifolds \cite{DehnSommerville}. The topic however comes
Also the occurrence of $-1/2$ as roots of the simplex generating 
functions of even dimensional spheres first led to excitement until realizing that it is an
other manifestation of Dehn-Sommerville. 

\paragraph{}
To see a factor $1+2x$ in the simplex generating function for even-dimensional spheres
is not so obvious because the roots in general change with homotopy deformations. 
The root at $x=-1/2$ also appears for other discrete manifolds with Euler characteristic $2$ 
like the disjoint union $G$ of two projective planes. The root $-1/2$ happens also for 
homology spheres (a complex which is not a sphere but which has the same homology and especially the 
same cohomology than the standard $3$-sphere). There are homology spheres however where the 
roots can become complex. And of course, because $f$ has positive entries, all roots of all 
derivatives of $f$ are negative and in the sphere case are contained in the open interval $(-1,0)$. 

\section{Lower bound from inductive dimension.}

\paragraph{}
We now compare the two additive functionals ${\rm dim}^+$ and ${\rm Dim}^+$. 
We consider it the main result of this paper as it is not totally obvious. 

\begin{thm}
For all simplicial complexes $G$, the inequality 
$$  \frac{{\rm dim}^+(G)}{2} \leq {\rm Dim}^+(G) \; $$
holds. 
\label{dimensioninquality}
\end{thm}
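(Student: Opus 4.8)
The plan is to reduce the statement to a purely combinatorial inequality about the quantity $m(x) = {\rm max}\{|y| \; : \; x \subseteq y \in G\}$, the cardinality of the largest simplex containing $x$. The first step is to repackage the inductive dimension. Writing the defining recursion ${\rm dim}(G) = 1 + \frac{1}{|G|}\sum_{x \in G}{\rm dim}(S(x))$ together with the inductive-dimension analogue of the Corollary above---namely that passing from the unit sphere $S(x)$ to the unit ball $B(x)$ (a cone) raises the inductive dimension by exactly one, so that ${\rm dim}^+(B(x)) = {\rm dim}^+(S(x)) + 1$---and substituting ${\rm dim}(S(x)) = {\rm dim}(B(x)) - 1$, the additive $1$'s cancel and I obtain the clean averaging identity
$$ {\rm dim}^+(G) = \frac{1}{|G|}\sum_{x \in G}{\rm dim}^+(B(x)) \; . $$
This is the \emph{right} repackaging: feeding the naive bound ${\rm dim}(S(x)) \leq {\rm max}(S(x))$ directly back into the recursion loses a full unit and is far too weak, whereas the ball carries the $+1$ in exactly the right place.

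Next I apply the inequality ${\rm dim}^+ \leq {\rm max}$ (the inductive dimension never exceeds the maximal dimension, as recalled in Section 1) to each ball. Since the largest set in the unit ball $B(x)$ is the largest coface of $x$, we have ${\rm max}(B(x)) = m(x)$, hence ${\rm dim}^+(B(x)) \leq m(x)$ and therefore
$$ {\rm dim}^+(G) \leq \frac{1}{|G|}\sum_{x \in G} m(x) \; . $$
Comparing with ${\rm Dim}^+(G) = \frac{1}{|G|+1}\sum_{x \in G}|x|$, it suffices to prove the combinatorial inequality
$$ (|G|+1)\sum_{x \in G}m(x) \; \leq \; 2|G|\sum_{x \in G}|x| \; . $$
A quick check shows this is an equality precisely for the complete complexes $K_n$ (where $m(x) \equiv n$ and every subset of the top simplex occurs), which matches the equality case asserted in the theorem and confirms that neither reduction step wastes anything at the extremal complexes.

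To prove the combinatorial inequality I would exploit that every $x$ sits inside a facet $\sigma$ with $|\sigma| = m(x)$, all of whose subsets lie in $G$, and pair $x$ with its complement $\sigma \setminus x$ inside that facet: since $|x| + |\sigma \setminus x| = m(x)$, and a uniformly chosen subset of an $m$-element set has expected cardinality $m/2$, this complementation is exactly the mechanism producing the factor $2$. The hard part---and what makes the statement ``not totally obvious''---is that a face is typically shared by many facets of different sizes, so a facet-by-facet double count overcounts and the complement $\sigma \setminus x$ need not be charged to the same facet as $x$. I expect to control this either by assigning each simplex to one largest facet containing it and then bounding the resulting overcount, or by induction on $|G|$ via deletion of a maximal simplex, tracking how the three quantities $\sum_{x} m(x)$, $\sum_{x}|x|$ and $|G|$ change under the deletion. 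The sharp denominator $|G|+1$ rather than $|G|$ is what forces this bookkeeping to be carried out exactly rather than with slack, so I expect that step to be the genuine obstacle.
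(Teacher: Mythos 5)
Your two reduction steps are sound: the identity ${\rm dim}^+(G)=\frac{1}{|G|}\sum_{x\in G}{\rm dim}^+(B(x))$ does follow from the defining recursion together with the cone formula ${\rm dim}^+(B(x))={\rm dim}^+(S(x))+1$ (a special case of the cited Betre--Salinger additivity, since ${\rm dim}^+(K_1)=1$), the bound ${\rm dim}^+(B(x))\le {\rm max}(B(x))=m(x)$ is legitimate, and your equality check for $K_n$ is correct. But the proposal stops exactly where the theorem's content lives: the combinatorial inequality $(|G|+1)\sum_x m(x)\le 2|G|\sum_x|x|$ is never proved, and you yourself flag it as ``the genuine obstacle.'' Moreover, replacing ${\rm dim}^+(B(x))$ by ${\rm max}(B(x))$ discards all the slack between inductive and maximal dimension, so your residual inequality is \emph{strictly stronger} than the theorem: on a pure complex of maximal dimension $d$ one has $m(x)\equiv d+1={\rm max}(G)$, and your inequality becomes ${\rm max}(G)/2\le{\rm Dim}^+(G)$, i.e. $\sum_i(i+1)v_i\ge\frac{d+1}{2}\bigl(1+\sum_i v_i\bigr)$ --- genuine f-vector territory. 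The pairing mechanism you sketch does not survive scrutiny: beyond the sharing problem you note, no symmetry such as $v_i\le v_{d-i}$ is available even for pure complexes (a perfect matching, a pure $1$-complex, has $v_0=2k>k=v_1$), so one would need something like a facet-multiplicity double count combined with a correlation inequality (the multiplicity $c(x)$ of facets containing $x$ is monotone under inclusion), and then a separate mechanism for non-pure complexes with mixed facet sizes --- none of which is supplied. As it stands, this is a plan with an open core, not a proof.

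The paper's route avoids this entirely and is worth contrasting. It inducts on the maximal dimension of $G$: the hypothesis ${\rm dim}^+(S(x))\le 2\,{\rm Dim}^+(S(x))$ is applied to every unit sphere (which has maximal dimension at most $d-1$), so the full strength of the statement is retained at each level rather than being flattened to ${\rm max}$. The only new ingredient is then the soft averaging lemma ${\rm E}_G[{\rm Dim}^+]\le{\rm Dim}^+(G)-\frac{1}{2}$, where ${\rm E}_G$ averages over all unit spheres; its proof uses the same ball--sphere shift ${\rm Dim}^+(B(x))={\rm Dim}^+(S(x))+\frac{1}{2}$ that you correctly identified, plugged into a conditional-expectation (tower-property) argument for the cover of $G$ by the unit balls $B(x)$, with equality forcing $G=K_n$. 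In particular, the delicate $|G|+1$ bookkeeping that you predict to be the obstacle is absorbed into the strictness of that one averaging inequality. If you want to salvage your approach, you must actually prove your combinatorial inequality for arbitrary (in particular non-pure) complexes, knowing that in the pure case it already asserts more than the theorem; alternatively, keep your ball identity but feed the inductive hypothesis, rather than ${\rm max}$, into ${\rm dim}^+(B(x))$ --- at which point you have rediscovered the paper's proof.
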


\paragraph{}
{\bf Examples.} \\
{\bf 1)} For complexes with maximal dimension $0$, the number $n$ of $0$-dimensional sets
determines the complex. The left hand side ${\rm dim}^+(G)/2$ is then always $1/2$. 
The right hand side is $n/(n+1)$. They are equal for $n=1$ only.\\
{\bf 2)} For complexes with maximal dimension $1$, there are two numbers $n,m$ which determine
the complex and ${\rm dim}^+(G)/2$ is in the interval $(1/2,1]$. Now, because $m \geq 1$
(we would with $m=0$ have a zero dimensional complex), we have
${\rm Dim}^+(G)  = (n+2m)/(n+m+1) \geq 1$. There is only one possibility that $(n+2m)/(n+m+1)=1$
and that is $m=1$. If $m=1$ and $n$ is larger than $2$, then ${\rm dim}^+(G)/2<1$. So, the
inequality is strict if $G$ is not the complete one-dimensional complex. \\
{\bf 3)} For circular graphs $C_n$ in particular we have ${\rm dim}^+(C_n)=2$ and ${\rm Dim}^+(C_n)=n/(n+1)$.

\paragraph{}
If $f$ is a functional on simplicial complexes, 
denote by ${\rm E}_G[f]$ the {\bf expectation} of $f$ when evaluated on 
all unit spheres $S(x)$ of $G$: 
$$  {\rm E}_G[f] =  \sum_{x \in G} f(S(x))/|G| \; . $$

\begin{lemma}
a) ${\rm Dim}^+(G)  \geq  \frac{1}{2} + {\rm E}_G[{\rm Dim}^+]$. \\
b) If equality holds in a), then $G=K_n$. 
\end{lemma}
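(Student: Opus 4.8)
The plan is to pass to the simplex generating function $f=f_G$ and exploit a Gauss--Bonnet type identity for its derivative. First I would prove the transfer identity
$$ f_G'(t) = \sum_{v} f_{S(v)}(t), $$
the sum running over the vertices $v$, where $S(v)$ is the unit sphere (link) of $v$. This comes from the cardinality-shifting bijection $\sigma\mapsto\sigma\setminus\{v\}$ between simplices $\sigma\ni v$ and simplices of $S(v)$ (with $\{v\}$ matched to the empty simplex), which gives $\sum_{\sigma\ni v}t^{|\sigma|}=t\,f_{S(v)}(t)$; summing over $v$ and using $\sum_v\sum_{\sigma\ni v}t^{|\sigma|}=\sum_\sigma|\sigma|t^{|\sigma|}=t\,f_G'(t)$ yields the identity. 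Evaluating at $t=1$ gives $\sum_v f_{S(v)}(1)=f_G'(1)$ and one further derivative gives $\sum_v f_{S(v)}'(1)=f_G''(1)$.

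Using the Corollary $({\rm Dim}^+(B(v))={\rm Dim}^+(S(v))+1/2)$, statement a) is equivalent to ${\rm Dim}^+(G)\ge \frac{1}{|V|}\sum_v {\rm Dim}^+(B(v))$. The key structural fact I would use is the mediant property of logarithmic derivatives: for polynomials with positive coefficients, $(\sum_v P_v)'(1)/(\sum_v P_v)(1)$ is the average of the ratios $P_v'(1)/P_v(1)$ weighted by $P_v(1)$. Applied to $P_v=f_{S(v)}$ together with the transfer identity this gives
$$ \frac{f_G''(1)}{f_G'(1)} = \sum_v \frac{a_v}{f_G'(1)}\,{\rm Dim}^+(S(v)), \qquad a_v:=f_{S(v)}(1), $$
i.e. $f_G''(1)/f_G'(1)$ is exactly the average of the ${\rm Dim}^+(S(v))$ weighted by $a_v$ (the number of simplices containing $v$). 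This splits a) into two inequalities at $t=1$: (A) the uniform vertex-average is at most the $a_v$-weighted average, $\frac{1}{|V|}\sum_v{\rm Dim}^+(S(v))\le f_G''(1)/f_G'(1)$; and (B) $f_G''(1)/f_G'(1)\le f_G'(1)/f_G(1)-\tfrac12={\rm Dim}^+(G)-\tfrac12$.

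I would handle (B) through the simplex-size distribution. Writing $K$ for the cardinality of a uniformly chosen simplex (the empty one included), ${\rm Dim}^+(G)=\mathbb{E}[K]$ and a short computation gives $f_G'(1)/f_G(1)-f_G''(1)/f_G'(1)=1-\mathrm{Var}(K)/\mathbb{E}[K]$, so (B) is equivalent to the concentration bound $\mathrm{Var}(K)\le\frac12\mathbb{E}[K]$. Expanding, this is the symmetric inequality $\big(\sum_j c_j\big)\big(\sum_k k\,c_k\big)\ge\sum_{j,k}c_jc_k(j-k)^2$ on the $f$-vector, which I would prove from downward closure (a $k$-simplex forces its $\binom{k}{j}$ faces of size $j$, keeping the low-cardinality mass large enough); it is tight exactly for complete complexes, where $K$ is $\mathrm{Binomial}(n,\tfrac12)$ and $\mathrm{Var}(K)=n/4=\frac12\mathbb{E}[K]$. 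For b), equality in a) forces both (A) and (B) to be equalities; equality in (B) pins down $c_k=\binom{n}{k}$, so $G$ is the complete complex $K_n$, and then (A) holds with equality automatically by symmetry.

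The hardest step will be (A): reconciling the uniform average over vertices (the one appearing in ${\rm E}_G$) with the natural $a_v$-weighted average produced by the generating function. This amounts to showing that $a_v$ and ${\rm Dim}^+(S(v))$ are positively correlated across the vertices --- vertices lying in more simplices tend to have unit spheres of larger average cardinality. This is intuitively clear but resists a one-line argument, since average-of-ratios versus ratio-of-sums is exactly a covariance condition; I expect to need either a monotonicity argument (large $a_v$ forces, and is forced by, a rich neighborhood) or an induction on the number of vertices to close it.
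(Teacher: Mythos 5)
Your proposal contains a fatal error at step (B): the inequality $f_G''(1)/f_G'(1)\le f_G'(1)/f_G(1)-\tfrac12$, equivalently ${\rm Var}(K)\le\tfrac12\,\mathbb{E}[K]$, is simply false for simplicial complexes, and downward closure does not rescue it. Take $G$ generated by one simplex on $8$ vertices together with $256$ isolated vertices. Then $c_0=1$, $c_1=8+256=264$, $c_k=\binom{8}{k}$ for $k\ge 2$, so $f(1)=2^8+256=512$, $f'(1)=8\cdot 2^7+256=1280$, $f''(1)=8\cdot 7\cdot 2^6=3584$. Hence $\mathbb{E}[K]=2.5$, $\mathbb{E}[K^2]=4864/512=9.5$, ${\rm Var}(K)=3.25>1.25=\tfrac12\mathbb{E}[K]$; equivalently $f''(1)/f'(1)=2.8>2={\rm Dim}^+(G)-\tfrac12$. (Padding with isolated vertices keeps the complex closed under subsets while dumping mass at cardinality $1$, far from the binomial bulk --- exactly the spreading your ``downward closure keeps low-cardinality mass controlled'' heuristic misses.) Note that in this example the inequality you are trying to prove holds with enormous slack --- the uniform vertex average of ${\rm Dim}^+(S(v))$ is $8\cdot\tfrac{7}{2}/264=7/66\approx 0.106$ --- so the failure is not of the lemma but of your decomposition: the chain (uniform average) $\le f''(1)/f'(1)\le {\rm Dim}^+(G)-\tfrac12$ breaks at the second link, and since the $a_v$-weighted average can sit strictly above ${\rm Dim}^+(G)-\tfrac12$ while the uniform average sits below it, (A) and (B) are not the right intermediate quantities and cannot both be repaired. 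On top of this, your step (A) --- the covariance inequality between $a_v$ and ${\rm Dim}^+(S(v))$ --- is explicitly left unproven, so even absent the counterexample the argument would be incomplete. What is correct and genuinely nice in your write-up: the transfer identity $f_G'(t)=\sum_v f_{S(v)}(t)$ and the observation that $f''(1)/f'(1)$ is the $f_{S(v)}(1)$-weighted average of the ${\rm Dim}^+(S(v))$; these check out.

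There is also a mismatch with the statement itself: the paper defines ${\rm E}_G[f]=\sum_{x\in G}f(S(x))/|G|$, an average over \emph{all} simplices $x\in G$ (with $S(x)=\{y:\,y\subset x\ \hbox{or}\ x\subset y\}$), not over vertices only; the induction in the paper's main theorem needs precisely this simplex-averaged version, since its inductive dimension averages ${\rm dim}^+(S(x))$ over all $x\in G$. So even a correct proof of your vertex-averaged reformulation would not prove the lemma as stated. For comparison, the paper's own proof avoids second derivatives and any concentration bound entirely: it uses ${\rm Dim}^+(B(x))={\rm Dim}^+(S(x))+\tfrac12$ (the join formula $B(x)=S(x)\oplus K_1$) together with a covering/conditional-expectation inequality --- the balls $B(x)$ cover $G$, and averaging the conditional averages of the (augmented) cardinality over this cover bounds the global average --- with equality forcing all conditional averages to be equal, which characterizes $G=K_n$ and yields b). If you want to salvage your generating-function route, you would need an intermediate functional that, unlike $f''(1)/f'(1)$, is provably trapped between the simplex-averaged ${\rm E}_G[{\rm Dim}^+]$ and ${\rm Dim}^+(G)-\tfrac12$ for all complexes; the counterexample above shows the variance of the size distribution is not it.
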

\begin{proof}
a) Every simplex $X$ in $S(x)$ corresponds to a simplex $X \cup x$ in $G$ and 
its average count is $1/2$ larger. This is reflected in the formula 
${\rm Dim}^+(B(x)) = 1/2 + {\rm Dim}^+(S(x))$.
Now we can use an elementary averaging (conditional expectation) result
for finite probability spaces $\Omega$: if $A_n$ is a covering of $\Omega$ and $X$ is a
non-negative random variable, then ${\rm E}_G[ E[X|A_j] ] \geq {\rm E}_G[X]$. This becomes
even strict if we average by the cardinality +1 rather than the cardinality. 
We can apply this now to the situation where the unit balls $B(x)$ cover $G$. \\
b) Equality holds in the later case if and only if $j \to  {\rm E}_G[X|A_j]$ is constant
which means that $G$ is a complete complex. 
If $A_j=S(x_j)$ is not a complete graph at some point, we have an inequality. 
\end{proof}

\paragraph{}
Let's now turn to the proof of the theorem~(\ref{dimensioninquality}) which 
states that  ${\rm dim}^+(G)/2 \leq {\rm Dim}^+(G)$.
\begin{proof}
We prove this by induction with respect to the maximal dimension $d$.
Assume things work in maximal dimension $(d-1)$.
This means ${\rm dim}^+(S(x))/2 \leq {\rm Dim}^+(S(x))$ 
or 
$$ {\rm dim}^+(S(x)) \leq 2 {\rm Dim}^+(S(x))  \; . $$
Use the inductive definition of dimension on the left
and the definition of expectation on the right, we get
$$ {\rm dim}^+(G) -1 \leq 2 {\rm E}_G[{\rm Dim}^+]  \; . $$
From the lemma, which tells that 
$$ {\rm E}_G[{\rm Dim}^+] \leq  {\rm Dim}^+(G)  - \frac{1}{2} $$
we conclude 
$$ {\rm dim}^+(G) -1 \leq 2 {\rm Dim}^+(G) -1 \; . $$
After adding $1$ on both sides and dividing by $2$ we end up
with the claim of the theorem. 
\end{proof} 

\paragraph{}
The inequality is sharp for complete complexes $K_n$. It is strict for
discrete unions of complete complexes and gets even bigger if complete complexes are 
joined along lower dimensional complexes.

\paragraph{}
We also tried first to see what happens if we look at the complex as a CW complex
and add a cell. Both the inductive dimension and average dimension are still
defined then. What happens is that we change the $f$-vector at one point but it 
is not clear what happens with $f'(1)/f(1)$ after such a choice. There is no
question however that the inequality holds for structures which are more general
than simplicial complexes. Discrete CW complexes are examples. 

\section{Arithmetic compatibility}

\paragraph{}
A recent result of Betre and Salinger \cite{BetreSalinger} tells that
$$ {\rm dim}^+(G \oplus H) = {\rm dim}^+(G) + {\rm dim}^+(H) \; . $$
This result for inductive dimension is not so obvious. It is much easier 
to prove the same formula for the average simplex cardinality. There is no
relation however and the following result does not imply the Betre-Salinger
equality: 

\begin{propo} 
${\rm Dim}^+(G \oplus H) = {\rm Dim}^+(G) + {\rm Dim}^+(H)$. 
\end{propo}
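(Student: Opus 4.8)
The plan is to reduce the statement to the logarithmic-derivative identity already recorded in the Lemma. Since ${\rm Dim}^+(G) = f_G'(1)/f_G(1) = g_G'(1)$ for the logarithm $g = \log f$, and the Lemma asserts $g_{G \oplus H}(t) = g_G(t) + g_H(t)$, differentiating this identity and evaluating at $t = 1$ yields
$$ {\rm Dim}^+(G \oplus H) = g'_{G\oplus H}(1) = g'_G(1) + g'_H(1) = {\rm Dim}^+(G) + {\rm Dim}^+(H), $$
which is exactly the claim. So the whole proposition is a special evaluation of the homomorphism property $g' = f'/f$ at the point $t=1$.

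The one step carrying actual content is the product formula $f_{G \oplus H} = f_G f_H$, which I would establish first. Writing $f_G(t) = \sum_{x \in G^+} t^{|x|}$ over the augmented complex $G^+ = G \cup \{\emptyset\}$ (so the constant term $1$ is the contribution $t^{|\emptyset|} = t^0$ of the empty set), the key observation is that the augmented join $(G \oplus H)^+$ is in bijection with the set of pairs $(y,z)$ with $y \in G^+$ and $z \in H^+$: every simplex of the join is uniquely a disjoint union $y \cup z$, the disjointness coming from the fact that the Zykov join places the vertex sets of $G$ and $H$ on disjoint ground sets. Because the union is disjoint, cardinalities add, $|y \cup z| = |y| + |z|$, and hence
$$ f_{G \oplus H}(t) = \sum_{y \in G^+,\, z \in H^+} t^{|y| + |z|} = \Big(\sum_{y \in G^+} t^{|y|}\Big)\Big(\sum_{z \in H^+} t^{|z|}\Big) = f_G(t) f_H(t). $$

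I expect the main (and only) obstacle to be the bookkeeping that makes this factorization genuinely a bijection rather than a mere multiset identity: one must check that distinct pairs $(y,z)$ give distinct unions $y \cup z$, so that no overcounting occurs in the convolution of coefficients. This is immediate from disjointness of the ground sets, since $y$ and $z$ are recovered from $y \cup z$ by intersecting with the vertex sets of $G$ and of $H$. With the product formula in hand, the remainder is formal: taking logarithms turns the product into a sum, differentiating gives additivity of $f'/f$, and evaluation at $t = 1$ delivers the stated identity. This argument is independent of, and considerably shorter than, the corresponding Betre--Salinger result for ${\rm dim}^+$, consistent with the remark preceding the proposition that the two facts do not imply one another.
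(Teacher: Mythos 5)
Your proof is correct and follows essentially the same route as the paper: both reduce the claim to the product formula $f_{G \oplus H} = f_G f_H$ and then apply the logarithmic-derivative (Leibniz) identity $(fg)'/(fg) = f'/f + g'/g$ at $t=1$. The only difference is that you supply an explicit bijective proof of the product formula via the decomposition of simplices of the join over the augmented complexes $G^+ \times H^+$, a fact the paper asserts without proof; this is a welcome addition but not a different argument.
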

\begin{proof}
As ${\rm Dim}^+(G) = f_G'(1)/f_G(1)$, this immediately follows from the Leibniz product rule  
$(fg)'/(fg)=f'/f+g'/g$, which holds for all $t$. 
\end{proof} 

\paragraph{}
This formula brings in arithmetic compatibility similarly as the {\bf genus functional}
$$ {\rm gen}(G) = 1-\chi(G)  \; , $$
where $\chi$ is the Euler characteristic. The identity
$$   {\rm gen}(G \oplus H) = {\rm gen}(G) {\rm gen}(H) $$ 
was useful for example when looking at the hyperbolic decomposition of a unit sphere with respect to the
counting function $|x|$ on $G$ which is Morse: we have $S(x) = S^-(x) \cup S^+(x)$,
where $S^-(x)$ is the sphere consisting of all simplices contained in $x$ and
$S^+(x)$ is the {\bf star} of $x$, the set of simplices containing $x$. Unlike $S^-(x)$
which is a simplicial complex, the star is in general not a simplicial complex, 
but its Barycentric refinement is.

\paragraph{}
Because ${\rm gen}(S^-(x)) = \omega(x)=(-1)^{|x|-1}$ we got the formula
${\rm gen}(S(x)) = 1-\chi(S(x)) = \omega(x) {\rm gen}(S^+(x))$. Because ${\rm gen}(S^+(x))$ is a
{\bf Poincar\'e-Hopf index} of the function $g(x)=-|x|$, we had
$\sum_x \omega(x) (1-\chi(S(x))) = \chi(G)$ which is a McKean-Singer type formula, the reason
for the terminology being that the super trace of $g=L^{-1}$ is $\chi(G)$, where $L$ is the
connection Laplacian of $G$, the reason being that the diagonal entries of $g(x,x)$ is the genus
$1-\chi(S(x))$ of the unit sphere $S(x)$. The Euler characteristic is by definition also the super trace
$\sum_x \omega(x) L(x,x)$ of $L$. The McKean-formula for the Hodge Laplacian is
${\rm str}(e^{-t H}) = \chi(G)$ which holds not only in differential geometry but works for
simplicial complexes.

\paragraph{}
A consequence of the Betre-Salinger identity is that one can naturally extend
the inductive dimension ${\rm dim}^+$ to the Zykov group defined by doing the 
{\bf Grothendieck completion} of the monoid $\oplus$. After that completion, there are complexes of 
positive sign as well as of negative sign, completely analogue as the negative numbers were introduced
in arithmetic. By extending the functional additively, we see that a negative complex $-G$ of a 
standard simplicial complex has a negative dimension. We summarize:

\begin{coro}
Inductive, average and maximal dimensions all extend to an additive group homomorphism
from the Zykov group of simplicial complexes and gives a result in the additive group $\mathbb{Q}$. 
The zero element, the empty complex $0$, gets mapped to $0$.
\end{coro}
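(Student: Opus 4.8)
The plan is to recognize this as a direct application of the universal property of the Grothendieck group, applied uniformly to all three functionals. Write $\phi$ for any one of $\mathrm{dim}^+$, $\mathrm{Dim}^+$, $\mathrm{max}$. The three additivity statements $\phi(G \oplus H) = \phi(G) + \phi(H)$ are precisely the assertions that each $\phi$ is a monoid homomorphism from $(\mathcal{G},\oplus)$ to the additive monoid $(\mathbb{Q},+)$: for $\mathrm{Dim}^+$ this is the Proposition just proved, for $\mathrm{dim}^+$ it is the Betre--Salinger identity \cite{BetreSalinger}, and for $\mathrm{max}$ it is immediate, since a top simplex of $G \oplus H$ is the union of a top simplex $y_0$ of $G$ with a top simplex $z_0$ of $H$ on disjoint vertex sets, so $\mathrm{max}(G \oplus H) = \mathrm{max}(G) + \mathrm{max}(H)$. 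First I would verify that $\oplus$ makes $\mathcal{G}$ a commutative monoid whose identity is the empty complex $0$, using $G \oplus 0 = G$ (the join product term is empty when one factor has no sets).

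Next I would check that each $\phi$ sends the identity $0$ to $0 \in \mathbb{Q}$. This is already forced by additivity, since $\phi(0) = \phi(0 \oplus 0) = 2\phi(0)$ gives $\phi(0) = 0$; but it is worth confirming directly. The generating function $f_0(t) = 1$ gives $\mathrm{Dim}^+(0) = f_0'(1)/f_0(1) = 0$; the empty complex carries no simplex, so $\mathrm{max}(0) = 0$; and with the standard convention that the empty complex has dimension $-1$ (consistent with booting up the inductive definition on varieties) one gets $\mathrm{dim}^+(0) = \mathrm{dim}(0) + 1 = 0$.

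Then I would invoke the universal property. The Grothendieck completion $K(\mathcal{G})$ of the commutative monoid $(\mathcal{G},\oplus)$ comes with a monoid map $\iota: \mathcal{G} \to K(\mathcal{G})$ through which every monoid homomorphism from $\mathcal{G}$ into an abelian group factors uniquely. Since $(\mathbb{Q},+)$ is an abelian group and each $\phi$ is such a homomorphism, each extends uniquely to a group homomorphism $\widetilde{\phi} : K(\mathcal{G}) \to \mathbb{Q}$, given on formal differences by $\widetilde{\phi}([G] - [H]) = \phi(G) - \phi(H)$. The zero of the Zykov group is $[0]$, and $\widetilde{\phi}([0]) = \phi(0) = 0$, establishing the last sentence of the statement. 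The homomorphism property then follows termwise from $\phi(G \oplus G') = \phi(G) + \phi(G')$, and a negative complex $-G$ acquires the negative value $-\phi(G)$ as described.

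The one point requiring care — and the single step I would flag as the main obstacle — is well-definedness of $\widetilde{\phi}$ on the quotient defining $K(\mathcal{G})$. Two formal differences $[G]-[H]$ and $[G']-[H']$ are identified exactly when $G \oplus H' \oplus K \cong G' \oplus H \oplus K$ for some complex $K$; because the join monoid need not be cancellative, this stabilizing $K$ cannot be dropped a priori. Applying the additive $\phi$ to both sides and cancelling $\phi(K)$ in $\mathbb{Q}$ yields $\phi(G) - \phi(H) = \phi(G') - \phi(H')$, so the value is independent of the representative. This is where the additivity of all three functionals is genuinely used, and it settles the extension for $\mathrm{dim}^+$, $\mathrm{Dim}^+$ and $\mathrm{max}$ simultaneously.
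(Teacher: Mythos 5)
Your proposal is correct and follows essentially the same route as the paper, which derives the corollary from the additivity of each functional (Betre--Salinger for ${\rm dim}^+$, the Leibniz-rule proposition for ${\rm Dim}^+$, and the trivial case of ${\rm max}$) combined with the Grothendieck completion of the join monoid; the paper only sketches this in the preceding paragraph, while you supply the details (well-definedness on formal differences despite possible non-cancellativity, and the check that the empty complex maps to $0$), which is a faithful elaboration rather than a different argument.
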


\paragraph{}
Extending the genus $1-\chi(G)$ to the Zykov ring looks trickier at first because
$-G$ is $1/{\rm gen}(G)$ which is infinite if the genus of $G$ is zero.
The simple remedy is to look at the generating function
$f_G(t) = 1+v_0 t + v_1 t^2 + \cdots$ with $f$-vector $(v_0,v_1, \dots)$ of $G$.
which is multiplicative
$$  f_{G \oplus H}(t) = f_G(t) f_H(t) \; . $$
It is therefore possible to extend $f_G$ to the Zykov group, where it becomes a rational function.
Extending the rational function to the Zykov group is not a problem, extending particular values
can be, as the values can become infinity at poles of the rational function. 

\section{Barycentric refinement}

\paragraph{}
The {\bf Barycentric refinement} $G_1$ of $G$ is a new complex which 
can be seen as the set of vertex sets of complete subgraphs of the graph
with vertex set $G$ and edge set $(x,y)$ for which either $x \subset y$ or $y \subset x$.
One can also define it without graphs as the {\bf order complex} of $G$. The sets of $G_1$
are all subsets $A$ of $2^G$ which have the property that all elements of $A$
are pairwise contained in each other. 

\paragraph{}
The Barycentric refinement process has some nice limiting properties. (See 
\cite{AmazingWorld} for an overview.) An example is that the spectra of the 
Kirchhoff Laplacian $L(G)$ have a {\bf density of states} which converge to a limiting
measure which only depends on the maximal dimension of the complex \cite{KnillBarycentric2}.
An other example is that the zeta function of the connection Laplacian converges in the one-dimensional
case to a nice limiting zeta function. Unlike for the Kirchhoff Laplacians, where the 
limiting zeta function is difficult \cite{KnillZeta}, the connection Laplacian case
is easier \cite{DyadicRiemann} for zeta and because the connection Laplacians are unimodular.

\paragraph{}
Depending on the maximal dimension $d$ there is a $(d+1) \times (d+1)$ matrix 
$$  A_{ij}  = {\rm Stirling}(j,i) i!   \;  $$
such that $f_{G_1} = A f_G$. While this induces a linear map on $f_G'(1)$, it only induces
an affine map on $f_G(1)$. It also induces a transformation on the probability vector $f/|f|_1$.
Here is the matrix $A$ in the case $d=10$: 
$$ A_{10} = \left[
                 \begin{array}{ccccccccccc}
                  1 & 1 & 1 & 1 & 1 & 1 & 1 & 1 & 1 & 1 & 1 \\
                  0 & 2 & 6 & 14 & 30 & 62 & 126 & 254 & 510 & 1022 & 2046 \\
                  0 & 0 & 6 & 36 & 150 & 540 & 1806 & 5796 & 18150 & 55980 & 171006 \\
                  0 & 0 & 0 & 24 & 240 & 1560 & 8400 & 40824 & 186480 & 818520 & 3498000 \\
                  0 & 0 & 0 & 0 & 120 & 1800 & 16800 & 126000 & 834120 & 5103000 & 29607600 \\
                  0 & 0 & 0 & 0 & 0 & 720 & 15120 & 191520 & 1905120 & 16435440 & 129230640 \\
                  0 & 0 & 0 & 0 & 0 & 0 & 5040 & 141120 & 2328480 & 29635200 & 322494480 \\
                  0 & 0 & 0 & 0 & 0 & 0 & 0 & 40320 & 1451520 & 30240000 & 479001600 \\
                  0 & 0 & 0 & 0 & 0 & 0 & 0 & 0 & 362880 & 16329600 & 419126400 \\
                  0 & 0 & 0 & 0 & 0 & 0 & 0 & 0 & 0 & 3628800 & 199584000 \\
                  0 & 0 & 0 & 0 & 0 & 0 & 0 & 0 & 0 & 0 & 39916800 \\
                 \end{array} \right]  \; . $$

\paragraph{}
The eigenvalues are the diagonal entries $\lambda_k=k!$. The eigenfunctions $v_k$ are the 
{\bf Perron-Frobenius eigenvectors} of $A_k$. They all have non-negative components. 
The matrix is only non-negative but has the property
that given a vector with positive entries (as must happen for $f$-vectors of simplicial complexes), 
the map $\overline{f} \to \overline{A} \overline{f} = \overline{Af}$ on directions $\overline{f}/|f|$ 
is a contraction. There exists therefore a unique fixed point of $\overline{A}$. This is the Perron-Frobenius
eigenvector of $A$ and is the stable probability distribution $f/|f|_1$ in dimension $d$. 
Since the limiting mean of this probability distribution is the limiting average simplex cardinality, 
we have

\begin{thm}
Given a simplicial complex $G$ of maximal dimension $d$. The successive Barycentric refinements $G_n$ 
have a unique limit $\lim_{n \to \infty} {\rm Dim}^+(G_n)$ which is $f_d \cdot (1,2, \cdots, d+1) /|f_d|_1$, 
where $f_d$ is the unique Perron-Frobenius probability eigenvector of $A_d$. 
\end{thm}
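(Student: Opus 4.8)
The plan is to reduce the statement to the asymptotics of the linear map $A=A_d$ acting on $f$-vectors. Write $w=(1,2,\dots,d+1)$ and $\mathbf{1}=(1,\dots,1)$, and regard the $f$-vector $f_{G_n}=(v_0,\dots,v_d)$ as a column vector, so that $\sum_{x\in G_n}|x| = w\cdot f_{G_n}$ and $|G_n| = \mathbf{1}\cdot f_{G_n}$. Since $f_{G_{n+1}}=Af_{G_n}$ and Barycentric refinement preserves the maximal dimension $d$, we have $f_{G_n}=A^n f_{G_0}$ and
$$ {\rm Dim}^+(G_n) = \frac{w\cdot A^n f_{G_0}}{\mathbf{1}\cdot A^n f_{G_0} + 1} \; . $$
Everything then follows from understanding the direction of $A^n f_{G_0}$ as $n\to\infty$.

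First I would record the spectral structure of $A$. Because $A$ is upper triangular with diagonal entries $\lambda_k=k!$ for $k=1,\dots,d+1$, its eigenvalues are exactly these factorials; they are pairwise distinct, so $A$ is diagonalizable, and the largest eigenvalue $\lambda_{d+1}=(d+1)!$ is simple and strictly dominant. Let $u$ be an eigenvector for $(d+1)!$. The key structural observation is that for an upper triangular matrix the eigenvector attached to the eigenvalue sitting in position $(k,k)$ is supported on the first $k$ coordinates; by back-substitution every eigenvector other than $u$ therefore has vanishing last coordinate, while $u$ has nonzero last coordinate.

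The crucial nondegeneracy step is then immediate. Expanding $f_{G_0}=c\,u+r$ with $r$ a combination of the remaining eigenvectors and reading off the last coordinate, the vanishing of the last coordinate of $r$ forces $c$ times the last coordinate of $u$ to equal $v_d$, the number of top-dimensional simplices of $G_0$. Since $G_0$ has maximal dimension exactly $d$ we have $v_d\ge 1>0$, hence $c\ne 0$: every admissible initial vector has a nonzero component along the dominant direction. I expect this to be the main obstacle conceptually, because $A$ is only non-negative and not irreducible, so one cannot invoke the Perron-Frobenius theorem for irreducible matrices directly; the upper-triangular support argument is what replaces it and what forces $c\ne 0$ for every complex of dimension exactly $d$, which is precisely what yields independence of the limit from the initial complex $G_0$.

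Finally I would pass to the limit. Writing $A^n f_{G_0}=c\,\lambda_{d+1}^n\,u + O((d!)^n)$ (the error coming from the remaining eigenvalues, all at most $\lambda_d=d!<\lambda_{d+1}$, and with no polynomial factors since $A$ is diagonalizable) and dividing numerator and denominator of the displayed quotient by $\lambda_{d+1}^n$, the constant $+1$ and all error terms vanish in the limit while the nonzero factor $c$ cancels, giving
$$ \lim_{n\to\infty}{\rm Dim}^+(G_n) = \frac{w\cdot u}{\mathbf{1}\cdot u} \; . $$
It remains to identify the normalized direction $u/|u|_1$ with the Perron-Frobenius probability eigenvector $f_d$. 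Since $A$ has non-negative entries it preserves the non-negative cone, so $A^n f_{G_0}\ge 0$ for all $n$; its normalized limiting direction therefore lies in the closed non-negative cone, so $u$ may be taken $\ge 0$, and being the unique $\ell^1$-normalized non-negative eigenvector of the simple dominant eigenvalue it is exactly $f_d$, with $\mathbf{1}\cdot u=|u|_1>0$. Hence $\frac{w\cdot u}{\mathbf{1}\cdot u}=w\cdot\frac{u}{|u|_1}=w\cdot f_d=f_d\cdot(1,2,\dots,d+1)=f_d\cdot(1,2,\dots,d+1)/|f_d|_1$, manifestly the same for every $G_0$. The only remaining points are the routine big-$O$ bookkeeping and the trivial positivity $\mathbf{1}\cdot u>0$ once $u\ge 0$ and $u\ne 0$; this spectral route is essentially the concrete counterpart of the contraction-of-directions remark made earlier for $\overline{A}$.
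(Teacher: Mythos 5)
Your proof is correct, and it takes a genuinely more explicit route than the paper. The paper's own argument is a one-line dynamical remark: for $f$-vectors (which have all entries positive, since a complex of maximal dimension $d$ contains simplices of every dimension $\leq d$) the induced map $\overline{f} \to \overline{Af}$ on directions is claimed to be a contraction with a unique fixed point, which is identified with the Perron-Frobenius probability eigenvector $f_d$; the theorem is then read off from the fact that the limiting mean of the stable distribution is the limiting average cardinality. You instead diagonalize: distinct eigenvalues $k!$, the triangular back-substitution lemma showing every non-dominant eigenvector has vanishing last coordinate, the nondegeneracy $c \neq 0$ forced by $v_d \geq 1$, and power iteration with error $O((d!)^n)$ against the dominant growth $((d+1)!)^n$. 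Your version buys something real here. Since $A$ is upper triangular it is reducible, and the projectivized map actually has a fixed direction in the \emph{closed} non-negative cone for every eigenvalue $k!$ (the zero-padded Perron vectors of the submatrices $A_k$, e.g.\ $e_1$ for $\lambda=1$, all of which the paper itself notes are non-negative); moreover the image of the positive cone under $A$ reaches arbitrarily close to the boundary, so the standard Birkhoff/Hilbert-metric contraction machinery does not apply uniformly. Uniqueness of the fixed point thus holds only after restricting to directions with nonzero last coordinate, and that restriction is exactly what your support-plus-$v_d \geq 1$ argument makes rigorous; the paper leaves this nondegeneracy implicit. Your route also explicitly disposes of the $+1$ in the denominator of ${\rm Dim}^+$ and yields a convergence rate $O((d+1)^{-n})$, while the paper's contraction viewpoint, once properly justified, is shorter and ties the statement directly to the stable-distribution picture it uses elsewhere.
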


\paragraph{}
To compare with inductive dimension ${\rm dim}^+$: if the maximal dimension of $G$ is $d$, then 
$c_d = \lim_{n \to \infty} {\rm dim}^+(G_n) = d+1$ 
because the largest dimensional simplices
replicate faster than lower dimensional ones. On the other hand, for the average
simplex dimension ${\rm Dim}^+$, the constant $C_d$ 
is always in the interval $[(d+1)/2,(d+1))$. If $d$ is positive then 
$$  C_d \in ((d+1)/2,d+1)  \; . $$

\begin{figure}[!htpb]
\scalebox{0.42}{\includegraphics{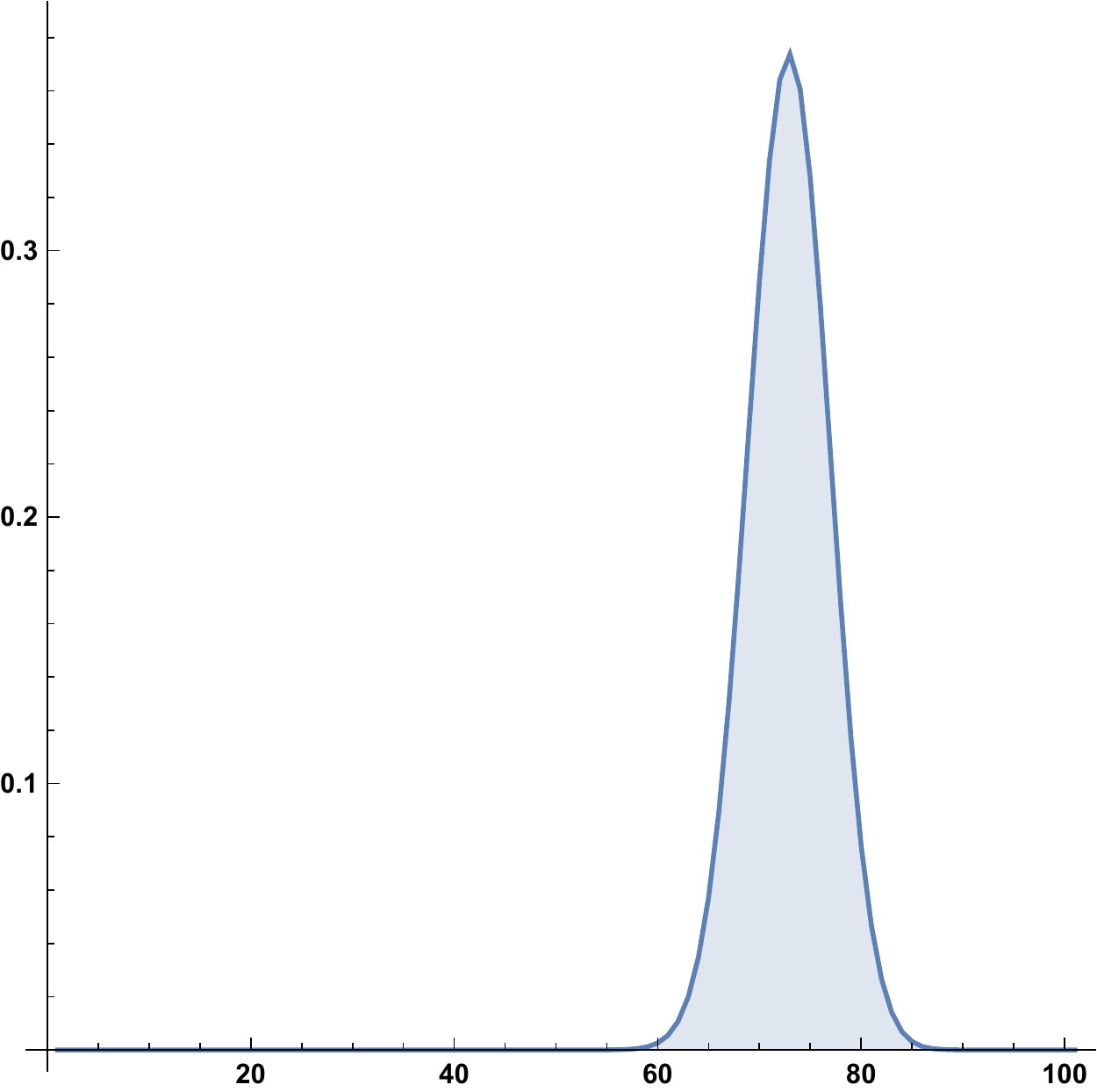}}
\scalebox{0.42}{\includegraphics{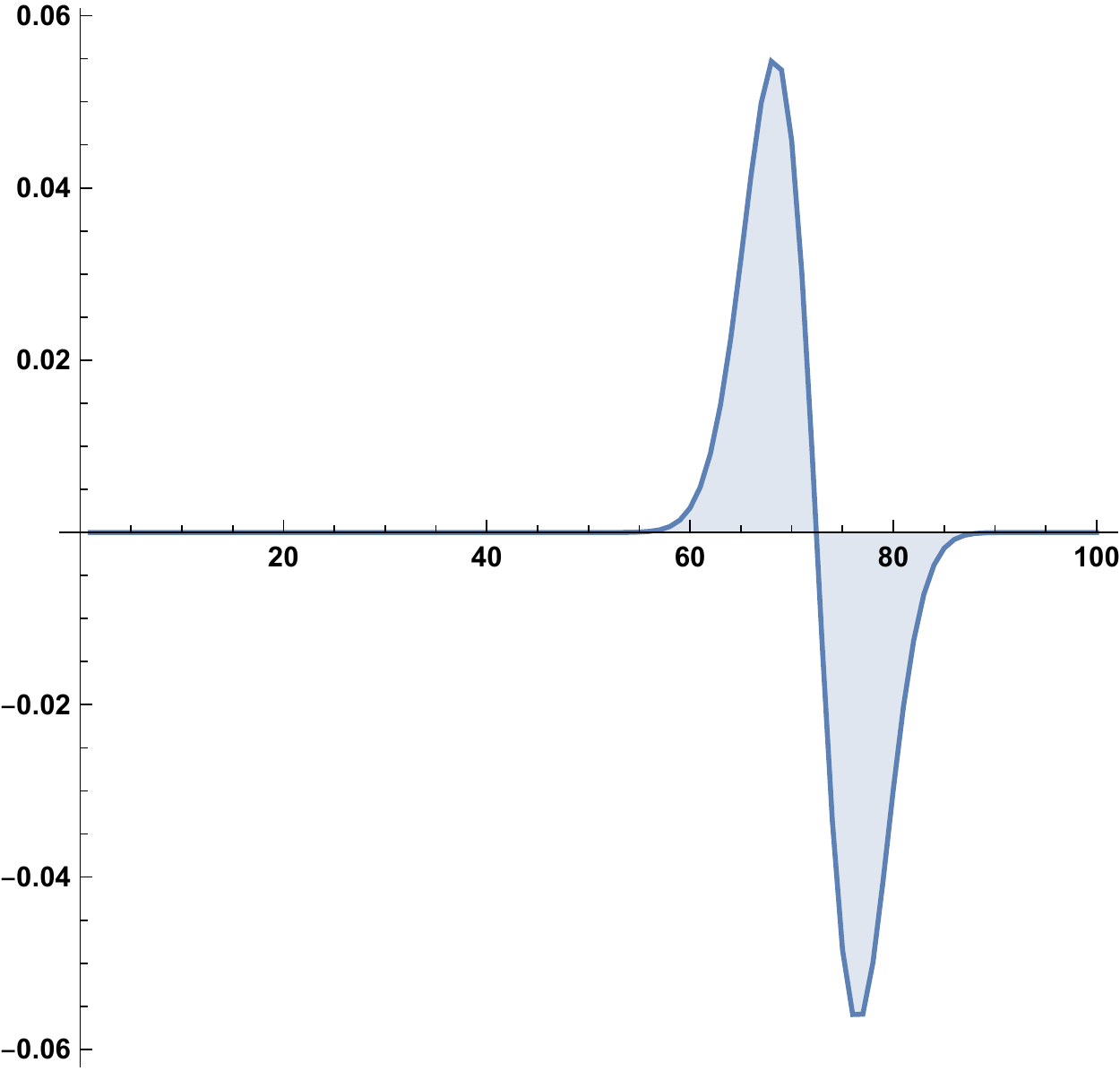}}
\label{eigenvector}
\caption{
The Perron-Frobenius eigenvector $f$ of $A_d$ in the case $d=100$. This vector defines  the limiting
distribution of the simplex cardinality for the limiting Barycentric refinement in 
dimension $d=100$. In the Barycentric limit, most simplices have about dimension $75$. To the right we
show the discrete derivative  $f'(k)=f(k+1)-f(k)$. The mean of $f$ is $72.828 ...$. This is
the expectation ${\rm Dim}^+(G_n)$ in the limit $n \to \infty$. The actual limit is
in dimension $d=100$ a rational number $p/q$, with $4423$ digit integers $p,q$. 
}
\end{figure}

\section{Remarks}

\paragraph{}
Various notions of dimension exist for discrete geometries. For finite abstract simplicial complexes
one has the {\bf maximal dimension} ${\rm max}(G)={\rm max}_{x \in G} |x|-1$ which counts the 
dimension of the largest simplex $x \in G$. The {\bf inductive dimension} \cite{elemente11,randomgraph} 
motivated from the corresponding notion for topological spaces \cite{HurewiczWallman} starts with the assumption
that the empty complex has dimension $-1$ and then defines the dimension of of a set of non-empty sets 
as $1$ plus the average dimension of the unit spheres $S(x)$ in $G$, where $S(x)$ is the 
pre-simplicial complex given by the set of sets in $G$ which either contain $x$ or are contained in $x$. 

\paragraph{}
For graphs $\Gamma=(V,E)$, the inductive dimension is one plus the average dimension of unit spheres $S(v)$
where the average is taken over all $v \in V$. The inductive dimension of the Whitney complex of $\Gamma$,
(the vertex sets of complete subgraphs of $G$). The inductive dimension of that complex $G$ 
is larger or equal than the dimension of $\Gamma$. The Barycentric refinement of a complex, the 
order complex of $G$ is always the Whitney complex of a graph, but there are complexes like 
$C_3=\{ \{ 1,2\},\{2,3\},\{3,1\} \}$ which are not Whitney complexes. 

\begin{figure}[!htpb]
\scalebox{0.32}{\includegraphics{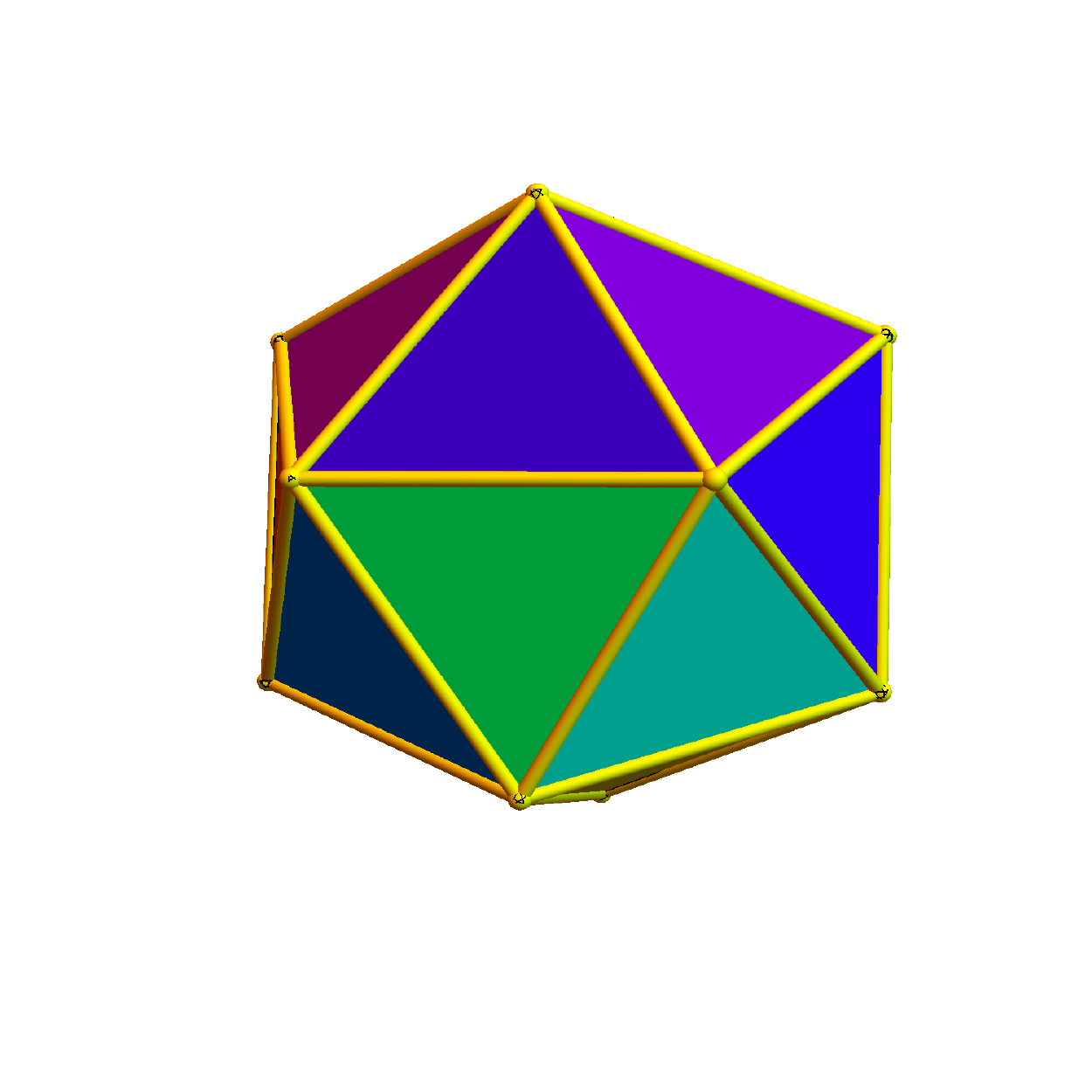}}
\scalebox{0.32}{\includegraphics{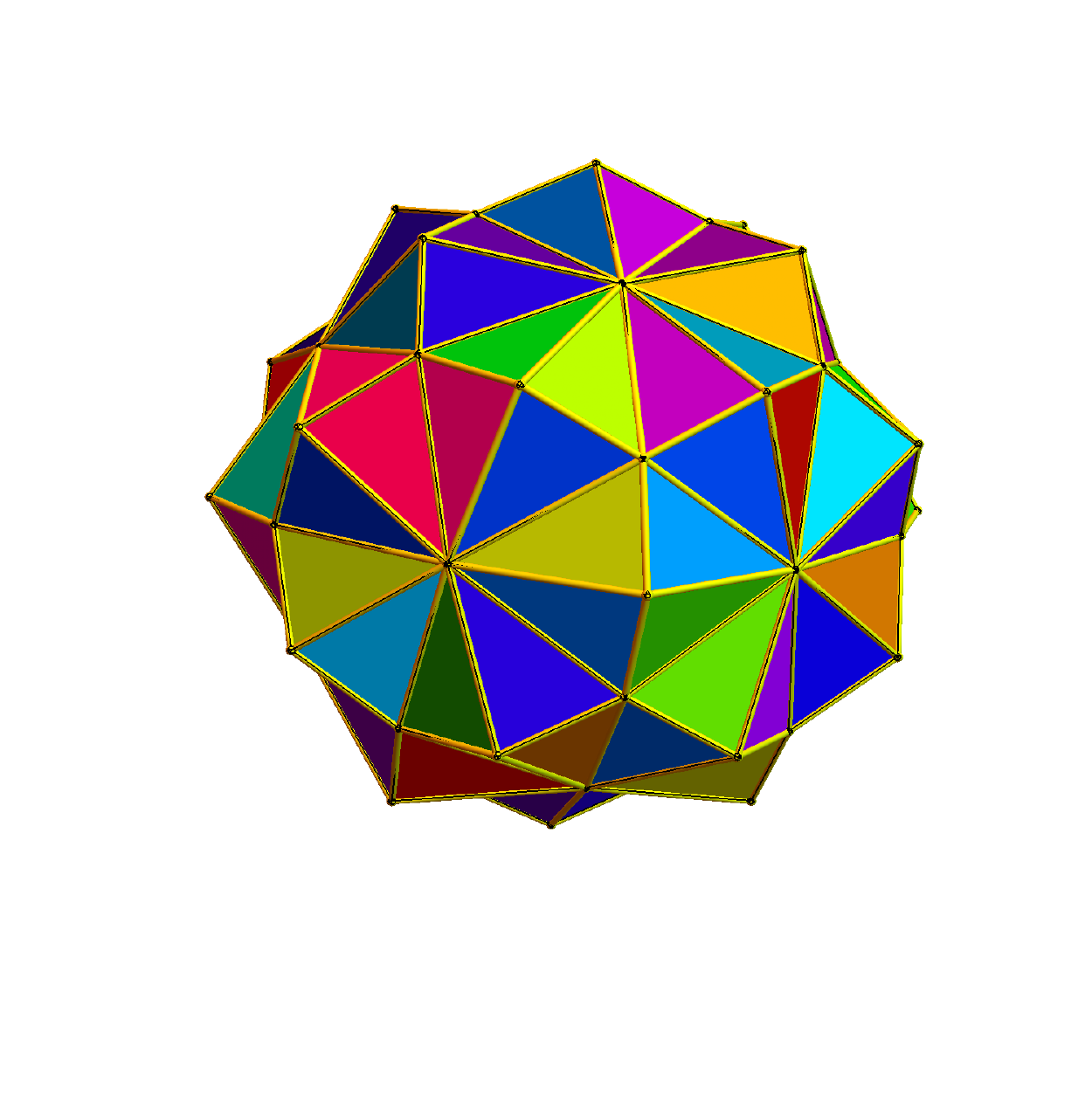}}
\scalebox{0.32}{\includegraphics{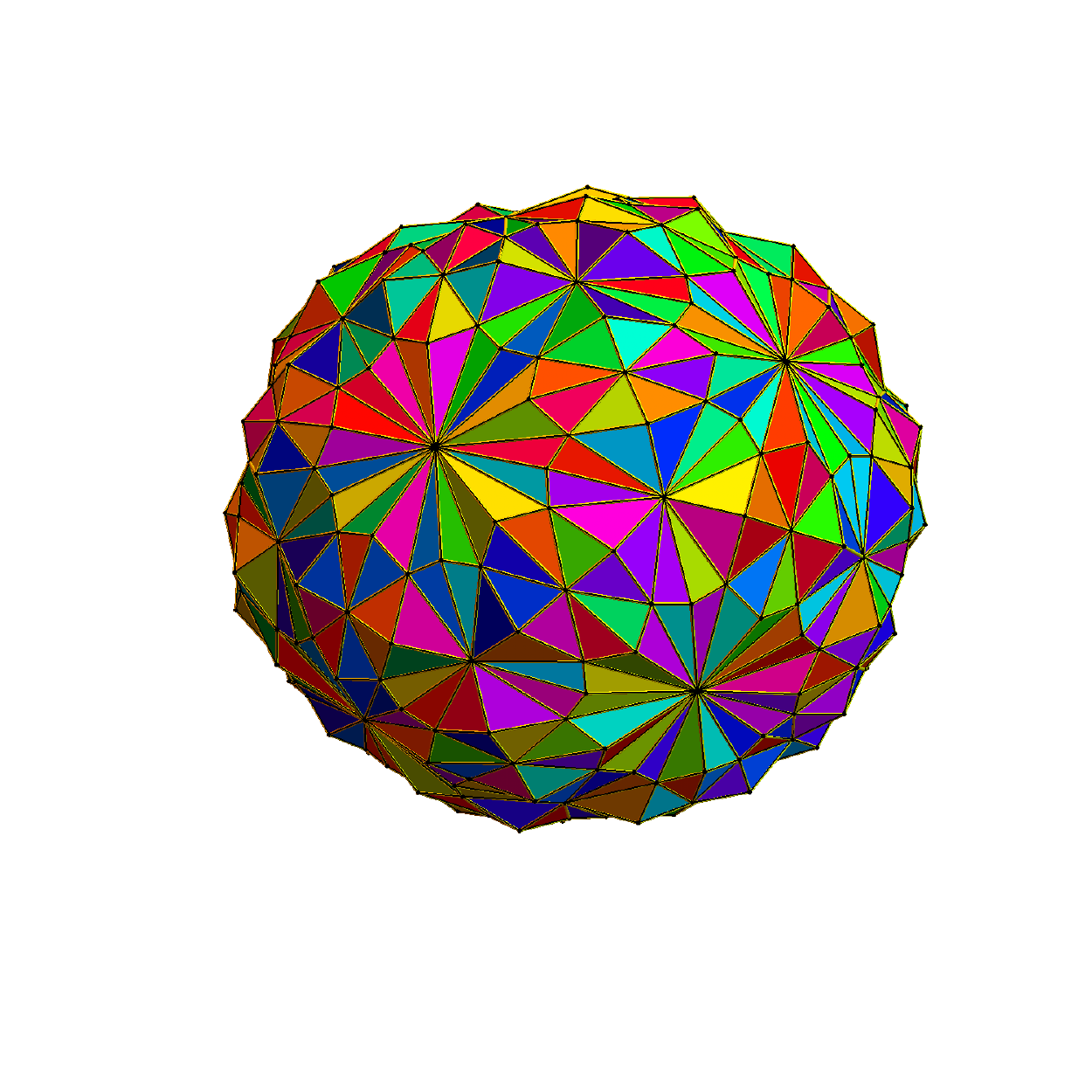}}
\label{icosahedron}
\caption{
The icosahedron complex $G$ with $f_G(t) = 1+12t+30t^2+20 t^2$ and its first two refinements $G_1,G_2$.
The inductive dimension of $G_k$ is ${\rm dim}(G_k) = 2$ so that ${\rm dim}^+(G_k)/2=3/2$. The average simplex cardinality
are ${\rm Dim}^+(G_0)=(1*12+2*30+3*20)/(1+12+30+20)=44/21=2.09..$, 
${\rm Dim}^+(G_1)=782/363=2.15..$ and ${\rm Dim}^+(G_2)=4682/2163=2.16..$.
The limiting dimension $\lim_{n \to \infty} {\rm Dim}^+(G_n)$ is $(1,3,2) \cdot (1,2,3)/(1+3+2)=13/6=2.1666..$. 
}
\end{figure}

\paragraph{}
Every simplicial complex $G$ also defines
a graph, in which $V=G$ are the vertices and two simplices are connected by an edge 
if one is contained in the other. The {\bf embedding dimension} of a graph \cite{EHT} 
is the minimal $d$ such that $G$ can embedded in $\mathbb{R}^d$ so that every edge length is $1$. 
The {\bf metric dimension} \cite{HararyMelter} 
is the minimal cardinality of a subset $C$ such that all other vertices are uniquely determined by the 
distances to vertices in $C$. Also motivated by corresponding notions put forward by Lebesgue and \v{C}ech
are notions of {\bf covering dimension} which looks at the minimal order of an open cover, the maximal
cardinality of open sets which a point can contain. In the discrete, this depends on the type of coverings
chosen as open sets. 

\paragraph{}
Both the maximal dimension as well as the inductive dimension are intuitive as they agree with classical 
topological dimensions in the case of discrete manifolds or varieties and both have arithmetic compatibility 
with the join operation. The inductive dimension captures subtleties of general networks as it allows for 
fractional dimension like {\bf Hausdorff dimension} and also shares an inequality 
${\rm dim}(X \times Y) \geq {\rm dim}(X) + {\rm dim}(Y)$ \cite{KnillKuenneth} 
which parallels the same inequality for Borel sets and Hausdorff dimension in the continuum 
\cite{Falconer}. (Of course, the notions in the discrete and the continuum 
are completely different but the analogy is remarkable). 

\paragraph{}
We study here a dimension ${\rm Dim}$ which is of a more statistical nature as it counts the average dimension
of the building blocks of a simplicial complex $G$. Unlike the inductive ${\rm dim}$ or maximal dimensions ${\rm max}$ 
which are rather ``plain" for discrete manifolds or varieties (or more generally for homogeneous =pure
simplicial complexes, the averaging dimension adds additional information for manifolds. 
It also shares the arithmetic compatibility property ${\rm Dim}(G \oplus H) = ({\rm Dim}^+(G) + {\rm Dim}(H)-1$ 
with the same property for maximal dimension ${\rm max}(G \oplus H) = {\rm maxdim}(G) + {\rm maxdim}(H)-1$ 
and for inductive dimension ${\rm dim}(G \oplus H) = {\rm dim}(G) + {\rm dim}(H)-1$. It is this $-1$ discrepancy which
makes the use of the augmented versions ${\rm Dim}^+,{\rm dim}^+,{\rm max}$ more attractive. 

\section{Questions}

\paragraph{}
{\bf (A)} The answer to the question
\begin{conj} Is ${\rm Dim}^+(G_1) \geq {\rm Dim}^+(G)$ always true? \end{conj}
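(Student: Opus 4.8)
The plan is to resolve this question in the \emph{negative}: I expect the inequality to fail, so I would aim to produce an explicit counterexample rather than a proof. The guiding principle is the Barycentric limit theorem above, which says that the normalized $f$-vector $f_G/|f_G|_1$ is driven by repeated refinement toward the Perron--Frobenius eigenvector of $A_d$, so that ${\rm Dim}^+(G_n)$ converges to $C_d \in ((d+1)/2,d+1)$ from whichever side it happens to start. The examples in the paper (octahedron, icosahedron, the cyclic graphs $C_n$) are all manifold-like and sit \emph{below} their limiting value, which is surely what motivates the question; but nothing forces a general complex to start below $C_d$. A ``dense'' complex, carrying disproportionately many high-cardinality simplices relative to the equilibrium distribution, should start \emph{above} $C_d$, and then a single refinement ought to pull ${\rm Dim}^+$ back down.

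The cleanest place to make this precise is dimension $d=1$, where $C_1 = 3/2$ and everything is a two-parameter computation. Writing the $f$-vector as $(v_0,v_1)$, refinement sends it to $A_1(v_0,v_1) = (v_0+v_1,\,2v_1)$, and a direct computation shows that ${\rm Dim}^+(G) - {\rm Dim}^+(G_1)$ is a positive multiple of $v_1(v_1 - v_0 - 3)$. Hence refinement strictly lowers ${\rm Dim}^+$ exactly when $v_1 > v_0+3$, which is the same as ${\rm Dim}^+(G) > 3/2 = C_1$; the utility-graph balance $v_1 = v_0+3$ is the break-even case (explaining why $K_{3,3}$ is a fixed point), and the thin regime $v_1 < v_0+3$ is where the paper's examples live. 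First I would exhibit the $1$-skeleton of $K_5$, with $(v_0,v_1)=(5,10)$: here ${\rm Dim}^+(G) = 25/16$, while $A_1(5,10) = (15,20)$ gives ${\rm Dim}^+(G_1) = 55/36 < 25/16$. This single evaluation already answers the question.

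Having found one counterexample I would describe the whole family, to show the phenomenon is robust rather than accidental: for every one-dimensional complex with $v_1 > v_0+3$ the value strictly decreases under a single refinement and then collapses toward $3/2$, since on the normalized two-vector the refinement acts as a contraction with fixed direction $(1,1)$. The smallest counterexamples live on five vertices (for instance $K_5$ or $K_5$ minus an edge), and the pure $1$-skeletons of the complete graphs $K_n$ with $n \ge 5$ have ${\rm Dim}^+ \to 2$ while still being dragged down toward $3/2$ under refinement.

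The place where a naive attempt to \emph{prove} the inequality would break down is exactly the realizability step: one would like to argue that an $f$-vector is always ``sparse enough'' to lie below $C_d$, but the Kruskal--Katona constraints happily permit complexes with many edges on few vertices, and these overshoot the limit. So the main obstacle to the affirmative direction --- the possibility of starting above the Barycentric equilibrium --- is precisely what furnishes the counterexample. I would therefore record the answer as \textbf{No}: ${\rm Dim}^+(G_1) \ge {\rm Dim}^+(G)$ can fail, the monotone-increase behaviour being special to complexes (such as discrete manifolds and varieties) whose average cardinality already lies below $C_d$.
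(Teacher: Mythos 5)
Your proposal is correct, and it settles the question --- in the direction \emph{opposite} to the one the paper anticipates. The paper offers no proof of either direction: it reports that the answer ``has been yes in all cases seen so far,'' reduces the problem to a linear-algebra statement about $f$-vectors satisfying the Kruskal--Katona conditions, and records only the equality cases ($K_{3,3}$ and zero-dimensional complexes). Your one-dimensional identity checks out: with $f_G=(v_0,v_1)$ and $A_1f_G=(v_0+v_1,\,2v_1)$ one gets
$$ {\rm Dim}^+(G_1)-{\rm Dim}^+(G) \;=\; \frac{v_1\,(v_0+3-v_1)}{(1+v_0+v_1)(1+v_0+3v_1)} \; , $$
so the sign is governed by $v_1$ versus $v_0+3$ exactly as you say; this recovers the paper's $K_{3,3}$ fixed point ($v_1=v_0+3$, i.e.\ ${\rm Dim}^+(G)=3/2$) and shows that \emph{any} graph viewed as a one-dimensional complex with $v_1>v_0+3$ is a counterexample. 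Your $K_5$-skeleton numbers are right: ${\rm Dim}^+(G)=25/16=1.5625$ drops to ${\rm Dim}^+(G_1)=55/36\approx 1.528$, and the difference is $-20/576$, matching the formula. Since $(5,10)$ satisfies Kruskal--Katona ($10\leq \binom{5}{2}$), the paper's proposed linear-algebra reformulation is false as stated, not merely unproven. Strikingly, counterexamples are already latent in the paper's own example list: it computes ${\rm Dim}^+(K_{n,n})=2n/(n+1)$, which exceeds $3/2$ for $n\geq 4$, and since $K_{n,n}$ and its Barycentric refinements are triangle-free one-dimensional complexes converging to $C_1=3/2$, the Whitney complex of $K_{4,4}$ violates the conjectured monotonicity at the very first step ($8/5=1.6$ drops to $88/57\approx 1.544$); presumably the author's experiments only sampled complexes whose average cardinality starts below the Barycentric equilibrium. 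One small caveat: your heuristic that starting above $C_d$ forces a decrease at the first refinement is actually \emph{proved} by your computation only in dimension $d=1$; in higher dimensions that monotonicity claim would need a separate argument. But a single counterexample is all the question requires, so this does not affect your conclusion that the answer is no.
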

has been ``yes" in all cases seen so far. It is a linear algebra problem: 
we have to show that f-vectors satisfying the {\bf Kruskal-Katona} conditions have the property
$$   \frac{(Af \cdot v)}{(1+|Af|_1)} \geq \frac{(f \cdot v)}{(1+|f|_1)} \; ,  $$
where $v$ is the Perron-Frobenius eigenvector of the {\bf Barycentric refinement matrix}
$$  A_{ij} = i! {\rm Stirling}(j,i)  \; . $$
The inequality for vectors $f$ does not hold for all vectors $f$ but it appears to be
true however for f-vectors which appear for simplicial complexes. 

\paragraph{}
{\bf Example:} for the first complete graphs $G=K_n$, we get 
${\rm Dim}^+(G_1)-{\rm Dim}^+(G) = \delta_k$ with 
$$ \delta = (0,1/6,5/13,91/150,448/541, ...)  $$

\paragraph{}
{\bf (B)} One can ask whether
\begin{conj} $\mathcal{G}_{p/q} = \{ G \; | \; f'_G(1)/f_G(1) =p/q\}$  is finite on the class of varieties $G$ \end{conj}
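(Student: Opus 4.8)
The plan is to answer this question in the negative. I expect $\mathcal{G}_{p/q}$ to be in general \emph{infinite} on the class of varieties, and the approach is to exhibit an explicit infinite family of varieties sharing one value of ${\rm Dim}^+$. Because ${\rm Dim}^+(G)=f'_G(1)/f_G(1)$ depends only on the $f$-vector, the first step is to find, inside a fixed maximal dimension, an integer direction in $f$-vector space along which this logarithmic derivative at $1$ is constant, and then to realize each lattice point of that direction by a genuine variety.

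The cleanest instance is $d=1$, where $f_G(t)=1+v_0 t+v_1 t^2$ and ${\rm Dim}^+(G)=(v_0+2v_1)/(1+v_0+v_1)$; a one-line computation gives ${\rm Dim}^+(G)=3/2$ exactly when $v_1-v_0=3$. The key point is that subdividing an edge of a triangle-free graph adds one vertex and one edge, hence fixes $v_1-v_0$, and preserves the $1$-variety property: using the paper's unit-sphere formula $S(x)=\{y\in G : y\subset x \text{ or } x\subset y\}$, a vertex has for its sphere the antichain of incident edges (a $0$-dimensional variety) and an edge has for its sphere its two endpoints (a $0$-sphere). Starting from the utility graph $K_{3,3}$, which already satisfies $v_1-v_0=3$, iterated edge subdivisions then produce infinitely many pairwise non-isomorphic $1$-varieties, all with ${\rm Dim}^+=3/2$. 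An equally transparent family is the wedge of four cycles of length $\geq 4$ joined at a common vertex, which always has $v_1-v_0=3$. This already settles the question as stated.

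To see that the failure is not an artifact of dimension one or of non-manifold varieties, the next step is to produce the same phenomenon for closed surfaces. Using $2v_1=3v_2$ and $v_0-v_1+v_2=\chi$ one rewrites ${\rm Dim}^+$ as a fractional-linear function of the two parameters $(\chi,v_2)$; fixing ${\rm Dim}^+=7/3$ yields a linear relation whose integer solutions form an arithmetic progression along which $v_2\to\infty$ and $\chi\to-\infty$, with $v_0=\chi+v_2/2$ growing linearly in $v_2$. The main obstacle is then purely a realization problem: one must confirm that for all large members of the progression the resulting triple $(v_0,v_1,v_2)$ is the $f$-vector of an actual closed triangulated surface. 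Since the implied vertex count grows like $v_2/8$ whereas the minimal vertex count for a surface of Euler characteristic $\chi$ grows only like $\sqrt{-\chi}$, the Heawood-type inequality is eventually satisfied, and the existence of triangulations realizing every admissible vertex number above the minimum (Ringel--Youngs, Jungerman--Ringel) supplies the surfaces. This would give infinitely many closed $2$-manifolds with ${\rm Dim}^+=7/3$, so the negative answer survives even after restricting to manifolds of dimension $\geq 2$.

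Finally I would delimit the positive side of the picture so as to isolate the true content of the question: in dimension $0$ the value $n/(n+1)$ determines $n$, and for $1$-manifolds (disjoint unions of cycles) $v_1=v_0$ forces ${\rm Dim}^+=3N/(2N+1)$ with $N$ the total number of vertices, which pins $N$ and leaves only finitely many partitions into cycles. The sharp statement I would aim to prove is therefore that $\mathcal{G}_{p/q}$ is finite for manifolds of dimension $\leq 1$ but infinite already for $1$-dimensional varieties and for manifolds of every dimension $\geq 2$.
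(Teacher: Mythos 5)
You should first note the ground truth here: the paper does \emph{not} prove this statement. It is posed as an open Question, and the only evidence the paper offers is the remark that in the family $v_0=2v_1-4$ realizing ${\rm Dim}^+=4/3$ ``there are only finitely many examples which are varieties.'' So there is no proof to compare against; your proposal is a resolution attempt, and it resolves the question in the \emph{negative}, against the finiteness the question hopes for --- and I believe correctly so under the paper's own definitions. By the recursive definition (every unit sphere a $(d-1)$-variety, empty complex the $(-1)$-variety), any one-dimensional complex without isolated vertices is a $1$-variety: vertex spheres are $0$-dimensional, edge spheres are $0$-spheres. Edge subdivision preserves both this property and the quantity $v_1-v_0$, and ${\rm Dim}^+(G)=(v_0+2v_1)/(1+v_0+v_1)=3/2$ exactly when $v_1-v_0=3$, so your subdivided-$K_{3,3}$ family is an infinite set of connected $1$-varieties in $\mathcal{G}_{3/2}$. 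In fact this is implicit in the paper itself: it observes that the Barycentric refinement of $K_{3,3}$ --- which is precisely the full edge subdivision --- again has ${\rm Dim}^+=3/2$, and the $K_{3,3}$ figure caption even records $\delta(G_1)=\delta(G)$; iterating refinements already yields your counterexample. Your analysis also reconciles the paper's $4/3$ remark: connectivity forces $v_1\geq v_0-1$, which with $v_0=2v_1-4$ gives $v_1\leq 5$, so \emph{connected} varieties at value $4/3$ are indeed finite in number --- but since the $3/2$ family is connected, no connectivity hypothesis can rescue the question, and the paper's sentence ``only finitely many examples which are varieties'' fails for disconnected varieties even at $4/3$ (unions of $m-4$ paths on $2m-4$ vertices lie in $\mathcal{G}_{4/3}$ for every $m\geq 5$).

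Two points in your surface argument deserve explicit care, though neither is a fatal gap. Eliminating $v_1=3v_2/2$ and $v_0=\chi+v_2/2$ from ${\rm Dim}^+=7/3$ gives $v_2=(-8\chi-14)/3$ and $v_0=(-\chi-7)/3$, so you need $\chi\equiv 2\ (\mathrm{mod}\ 3)$ and $\chi$ quite negative before $v_0$ clears the Heawood bound; non-orientable surfaces supply infinitely many admissible $\chi$, and since $v_0\sim|\chi|/3$ eventually dominates the minimal vertex count $\sim\sqrt{6|\chi|}$, realization follows from the known minimal triangulations (Ringel, Jungerman--Ringel, with their small exceptional cases irrelevant asymptotically) together with the standard interpolation step: stellar subdivision of a facet adds one vertex, three edges and two triangles while keeping a triangulated surface, so every vertex count at or above the minimum occurs. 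Spelling out this congruence and interpolation bookkeeping would make the second construction airtight. Your final delimitation --- finiteness for manifolds of dimension at most $1$, infiniteness already for $1$-varieties and for closed $d$-manifolds with $d\geq 2$ --- is the correct sharp form of the answer, with the one caveat that a stricter notion of ``variety'' than the paper's stated recursion could change the verdict; the paper's own parenthetical restating the recursive definition rules that reading out.
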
 
($d$-varieties as defined recursively as complexes for which all unit spheres $S(x)$ are $(d-1)$-varieties with the 
induction assumption that the empty complex is a $-1$ dimensional variety. 

\paragraph{}
The set $\mathcal{G}_{p/q} = \{ G \; | \; {\rm Dim}^+(G) =p/q\}$ is infinite for arbitrary large 
one dimensional complexes with $v_0=n,v_1=m$ for example satisfying 
$n+2m/(n+m+1)=4/3$ by picking a number $m$ of edges, then choosing $n=2m-4$. But there are only 
finitely many examples which are varieties. By adding isospectral
examples, one get arbitrary large, arbitrary high dimensional complexes with the same ${\rm Dim}^+(G) = c$.  \\

\paragraph{}
{\bf (C)} The following question could have a simple answer, we just don't know yet. It is a pure
linear algebra question.  The Perron-Frobenius eigenvector $v_d=(v_0, \dots, v_d)$ of the matrix $A_d$
defines a probability distribution $f_d$ on $[0,1]$ by defining the real function $f_d(x)=v_d([d*x])$ 
normalized so that it is a probability density function. The sequence $f_d$ converges in the weak-* sense
as measures to a Dirac measures $1_{x_0}$ with $x_0 \sim 0.72...$. 

\begin{conj} Can we locate $x_0$?  \end{conj}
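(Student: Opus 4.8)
The plan is to identify the concentration point explicitly as $x_0=1/(2\log 2)=0.72134\ldots$ and to locate it by reducing the eigenvector problem for $A_d$ to the asymptotics of surjection numbers. First I would reinterpret the matrix: writing $N=d+1$, the entry $A_{ij}={\rm Stirling}(j,i)\,i!$ is exactly the number ${\rm Surj}(j,i)$ of surjections from a $j$-element set onto an $i$-element set (the count of cardinality-$i$ simplices produced by one cardinality-$j$ simplex under subdivision). Since $A_d$ is upper triangular with diagonal entries $i!$, its Perron--Frobenius eigenvalue is the largest diagonal entry $N!$, and the eigenvector $v=(v_1,\dots,v_N)$ satisfies $\sum_{m\ge n}{\rm Surj}(m,n)\,v_m=N!\,v_n$. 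The normalized $v$ is a probability vector on cardinalities $n\in\{1,\dots,N\}$; because cardinality and dimension differ by $1$, the sought $x_0$ is $\lim_{d\to\infty}n^\ast/N$, with $n^\ast$ the point where $v$ concentrates.

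The key step is to show that, to leading exponential order, $v$ is proportional to the last column $n\mapsto{\rm Surj}(N,n)$. For $m=sN$ and $n=tN$ one has $\log{\rm Surj}(m,n)=sN\log N+N\,H(s,t)+o(N)$, where the $sN\log N$ term comes from the factor $m!$. In the Laplace evaluation of $\sum_m{\rm Surj}(m,n)\,v_m$ this coefficient of $N\log N$ is proportional to $s$, whereas the right-hand side $N!\,v_n$ carries the fixed coefficient $1$ from $N!$; hence the maximizing $m$ is pinned at the largest admissible cardinality $s=1$, i.e. $m=N$. This forces $N!\,v_n\sim{\rm Surj}(N,n)\,v_N$, so $v_n\propto{\rm Surj}(N,n)$ and $x_0$ is simply the location of the maximum of $n\mapsto{\rm Surj}(N,n)=n!\,{\rm Stirling}(N,n)$.

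That maximum is the classical typical block count of a uniformly random ordered set partition of an $N$-set, governed by the Fubini asymptotics $\sum_n{\rm Surj}(N,n)\sim N!/(2(\log 2)^{N+1})$, whose summand peaks at $n\sim N/(2\log 2)$. To derive this directly I would use the exponential generating function $\sum_m{\rm Surj}(m,n)\,z^m/m!=(e^z-1)^n$ and a saddle point: from ${\rm Surj}(N,n)=N!\,[z^N](e^z-1)^n$ one gets $\tfrac1N\log{\rm Surj}(N,tN)=\phi(t)+\log N+\mathrm{const}$ with $\phi(t)=-\log z_\ast+t\log(e^{z_\ast}-1)$, where $z_\ast=z_\ast(t)$ solves the saddle equation $z e^z/(e^z-1)=1/t$. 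By the envelope theorem the inner stationarity cancels the $dz_\ast/dt$ contribution, leaving $\phi'(t)=\log(e^{z_\ast(t)}-1)$. Setting $\phi'=0$ gives $e^{z_\ast}=2$, i.e. $z_\ast=\log 2$; substituting back into the saddle equation yields $1/t=z_\ast e^{z_\ast}/(e^{z_\ast}-1)=2\log 2$, so $x_0=1/(2\log 2)$. This matches the numerics and the stated $x_0\sim0.72$: indeed $1/(2\log 2)=0.7213\ldots$, consistent with the reported mean $72.828$ at $d=100$ (where $72.828/101=0.721$).

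The main obstacle is rigor in the asymptotic analysis. The delicate points are: justifying the saddle-point estimate for ${\rm Surj}(sN,tN)$ uniformly in $s,t$ on compact subsets and controlling the subexponential prefactors; making the Laplace/boundary-domination argument at $s=1$ rigorous, including the contribution of $m$ slightly below $N$ and the size of $v_N$ relative to the bulk; and upgrading ``the density is peaked at $x_0$'' to genuine weak-$\ast$ convergence to $\delta_{x_0}$, which requires a second-order (local central limit) analysis showing the fluctuations of the cardinality are $o(N)$. A cleaner, self-contained alternative that sidesteps the saddle point would be to bound the ratios ${\rm Surj}(N,n+1)/{\rm Surj}(N,n)$ and show the unique sign change of $\log{\rm Surj}(N,n+1)-\log{\rm Surj}(N,n)$ occurs at $n/N\to1/(2\log 2)$, which also pins the mode; combining this with concentration would complete the identification of $x_0=1/(2\log 2)=1/\log 4$.
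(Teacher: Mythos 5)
This statement is posed in the paper as an open \emph{question}, not a theorem: the paper offers no proof, only the numerics $C_{500}/500\approx 0.722733$ and the remark that the limit $\approx 0.72$ is not yet identified. So there is nothing to compare your argument against; you are attempting to resolve the question, and your candidate $x_0=1/(2\log 2)=1/\log 4=0.721347\ldots$ is strongly supported by the paper's own data once denominators are matched to the cardinality range: $0.722733\cdot 500/501\approx 0.72129$ and $72.828/101\approx 0.72107$, both drifting toward $1/\log 4$ at a plausible $O(1/d)$ rate. Your combinatorial reading is correct: $A_{ij}=i!\,{\rm Stirling}(j,i)$ counts chains of length $i$ with prescribed top $j$-set, i.e. surjections (ordered set partitions), and your saddle-point computation for ${\rm Surj}(N,tN)=N!\,[z^N](e^z-1)^{tN}$ is sound: $\phi'(t)=\log(e^{z_*}-1)$ vanishes at $z_*=\log 2$, forcing $1/t=2\log 2$; the concentration point of the block count of random ordered set partitions at $N/(2\log 2)$ is indeed classical. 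Moreover $\phi$ is strictly concave (since $z\mapsto ze^z/(e^z-1)$ is strictly increasing, $z_*(t)$ is strictly decreasing), so once the exponential rate of $v_{tN}$ is identified as $\phi(t)$, weak-$*$ convergence to $\delta_{x_0}$ follows from Laplace asymptotics alone; the local central limit analysis you worry about is not needed for the Dirac limit, only for finer fluctuation statements.

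The genuine gap is the reduction $v_n\propto {\rm Surj}(N,n)$ at exponential order. Comparing coefficients of $N\log N$ pins down the maximizing \emph{single} term, but the eigenvector equation, solved downward from $v_N$, gives $v_n=(N!-n!)^{-1}\sum_{m>n}{\rm Surj}(m,n)\,v_m$, i.e. after iteration a sum over all descent chains $N=m_k>\cdots>m_0=n$; there are exponentially many chains, so termwise domination at $s=1$ does not by itself exclude that the multiplicity of chains, or the accumulation of contributions with $m=N-o(N)$, boosts the rate. Note that positivity gives the lower bound $v_n\geq {\rm Surj}(N,n)\,v_N/N!$ for free; only the matching upper bound is at stake. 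It is repairable along the lines you hint at: the Fubini bound ${\rm Surj}(m,n)\leq m!\,(1/\log 2)^{m+1}$ combined with $m!/N!\leq N^{-(N-m)}$ shows that any chain step whose intermediate cardinality drops macroscopically below $N$ costs a super-exponentially small factor, beating the chain count; and for $m=N-j$ one has ${\rm Surj}(N,N-j)/N!\leq N^j/(2^j j!)$, so the relative contribution of routing through $N-j$ is of order $e^{cj}/(2^j j!)$, summable in $j$, yielding only a subexponential multiplicative correction, uniformly for $t$ in compact subsets of $(0,1)$. With such a lemma in place your identification $x_0=1/\log 4$ stands, and it would also answer the paper's follow-up question in the Code section: $\lim_{d\to\infty} C_d/d=1/(2\log 2)$ is then irrational, indeed transcendental, since $\log 2$ is transcendental by Lindemann--Weierstrass.
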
 
Figure~(\ref{eigenvector}) shows the case $d=100$, where we see the Perron-Frobenius eigenvector
to the eigenvalue $\lambda=100!$.  \\

\paragraph{}
{\bf (D)} The average inductive dimension 
${\rm E}_p[{\rm dim}]$ on the Erd\"os-Renyi space $G(n,p)$ satisfies
$$ d_{n+1}(p) = 1+\sum_{k=0}^n \B{n}{k} p^k (1-p)^{n-k} d_k(p) \; , $$
where $d_0=-1$. Each $d_n$ is a polynomial in $p$ of degree $\B{n}{2}$.  We have for example:
$$ d_1(p) = 0, d_2(p)= p, d_3(p)= p(2-p + p^2), d_4(p) = p(3 - 3p + 4p^2 - p^3 - p^4 + p^5) \; .  $$

\begin{conj}
What is the average simplex cardinality on $G(n,p)$?
\end{conj}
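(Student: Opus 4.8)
The plan is to reduce everything to the expected simplex generating function, since $\mathrm{Dim}^+(G)=f_G'(1)/f_G(1)$ is a logarithmic derivative. For the Whitney (clique) complex of a graph in $G(n,p)$, a $k$-dimensional simplex is exactly a $(k+1)$-clique, and a fixed set of $k+1$ vertices spans a clique precisely when all $\binom{k+1}{2}$ edges among them are present. By linearity of expectation the expected $f$-vector is therefore
$$\mathrm{E}[v_k]=\binom{n}{k+1}p^{\binom{k+1}{2}},\qquad 0\le k\le n-1.$$
First I would assemble these into the \emph{expected generating function}
$$F_n(t):=\mathrm{E}[f_G(t)]=\sum_{j=0}^{n}\binom{n}{j}p^{\binom{j}{2}}t^{j},$$
where $j=k+1$ is the clique size and the $j=0$ term accounts for the empty set. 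Note that $v_0=n$ is deterministic, so $F_n$ already carries the whole averaged combinatorics.

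Next I would extract a closed form for the \emph{annealed} average cardinality $F_n'(1)/F_n(1)$. Using $j\binom{n}{j}=n\binom{n-1}{j-1}$ together with $\binom{j}{2}=\binom{j-1}{2}+(j-1)$ and reindexing, the numerator collapses to
$$F_n'(1)=n\sum_{i=0}^{n-1}\binom{n-1}{i}p^{\binom{i}{2}}p^{i}=n\,F_{n-1}(p),$$
so that
$$\frac{F_n'(1)}{F_n(1)}=\frac{n\,F_{n-1}(p)}{F_n(1)}=\frac{n\sum_{i=0}^{n-1}\binom{n-1}{i}p^{\binom{i}{2}}p^{i}}{\sum_{j=0}^{n}\binom{n}{j}p^{\binom{j}{2}}}.$$
This quantity is the mean of the probability distribution $a_j\propto\binom{n}{j}p^{\binom{j}{2}}$ on clique sizes $j$. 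Since $\log a_j\approx j\log(n/j)+j-\binom{j}{2}\log(1/p)$ is sharply peaked, for fixed $p$ its maximizer, and hence the mean, sit near $\log_{1/p}n$, so I expect the average simplex cardinality to grow like $\log_{1/p}n+O(1)$ as $n\to\infty$.

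The delicate point is that this annealed value is \emph{not} the honest expectation $\mathrm{E}[\mathrm{Dim}^+(G)]=\mathrm{E}[N/D]$ with $N=\sum_{x}|x|$ and $D=f_G(1)$, because $\mathrm{E}[N/D]\neq\mathrm{E}[N]/\mathrm{E}[D]$. Already $G(2,p)$ exhibits the gap: a direct computation gives $\mathrm{E}[\mathrm{Dim}^+]=(2+p)/3$, whereas the annealed formula gives $2(1+p)/(3+p)$, and these agree only at $p\in\{0,1\}$. To close the gap I would exploit the deterministic lower bound $D\ge 1+n$ (all $n$ vertices are always present) to keep the ratio bounded, and then apply second moment / Chebyshev concentration (or Azuma, viewing $N,D$ as edge-exposure martingales) of $N$ and $D$ about their means. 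In the regime where the dominant clique counts $\mathrm{E}[v_k]$ with $k\approx\log_{1/p}n$ are large, both $N$ and $D$ concentrate with vanishing relative fluctuation, giving $\mathrm{Dim}^+(G)=\bigl(F_n'(1)/F_n(1)\bigr)(1+o(1))$ asymptotically almost surely, and hence the same for $\mathrm{E}[\mathrm{Dim}^+]$. The main obstacle is exactly this quenched-versus-annealed control: an \emph{exact} closed form for $\mathrm{E}[N/D]$ seems out of reach beyond small $n$, and the concentration argument degrades in the sparse regime $p=p(n)\to 0$, where the expected number of larger cliques is small and the relative fluctuations of $D$ need not vanish; handling that range would require a careful analysis of the lower tail of $D$.
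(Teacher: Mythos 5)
You should note first that the statement you tackled is not a theorem of the paper but an open \emph{Question}: the paper offers no proof, only brute-force values ($1/2$, $5/6$, $35/32$, $6593/5040$, $18890551/12673024$ for $p=1/2$, $n=1,\dots,5$) and the remark that while ${\rm E}_p[f(t)]$ is known, the moments ${\rm E}_p[f(t)^k]$ needed for ${\rm E}_p[\log f(t)]$ are not. Your proposal correctly rediscovers exactly this obstruction in the form of the quenched-versus-annealed gap, and your concrete computations are right: the expected $f$-vector ${\rm E}[v_k]=\binom{n}{k+1}p^{\binom{k+1}{2}}$, the identity $F_n'(1)=nF_{n-1}(p)$ (which checks out via $j\binom{n}{j}=n\binom{n-1}{j-1}$ and $\binom{j}{2}=\binom{j-1}{2}+(j-1)$), and the $n=2$ discrepancy $(2+p)/3 \neq 2(1+p)/(3+p)$, whose value $5/6$ at $p=1/2$ matches the paper's enumeration. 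So your annealed analysis is a genuine, correct contribution beyond what the paper states --- but it does not answer the question as posed, which (by analogy with the exact recursion $d_{n+1}(p)=1+\sum_k\binom{n}{k}p^k(1-p)^{n-k}d_k(p)$ the paper gives for inductive dimension) asks for the honest expectation ${\rm E}[\mathrm{Dim}^+]$, and no analogous recursion is available for $\mathrm{Dim}^+$ because it is a global ratio $N/D$ rather than an average over vertices of a locally defined quantity.

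Two specific gaps in your closing argument deserve flags. First, the ``Azuma, viewing $N,D$ as edge-exposure martingales'' step would fail as stated: bounded differences must hold uniformly over all graphs, and in a dense graph a single edge can lie in a constant fraction of all cliques (in $K_n$, one edge is in $2^{n-2}$ of the $\sim 2^n$ cliques), so the worst-case Lipschitz constants are of the same order as $N$ and $D$ themselves. You would need typical bounded differences or Kim--Vu-type polynomial concentration for clique counts, which is substantially more delicate, and you correctly concede the sparse regime $p=p(n)\to 0$ is worse still. Second, your asymptotic $\log_{1/p}n+O(1)$ overstates the precision: the maximizer of $\log a_j\approx j\log(n/j)+j-\binom{j}{2}\log(1/p)$ solves $j=\log_{1/p}(n/j)$, so the mean is $\log_{1/p}n-\log_{1/p}\log_{1/p}n+O(1)$; the correction term is unbounded. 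In short: a correct and useful partial attack that sharpens the paper's own diagnosis of the difficulty, but the question --- an exact formula or recursion for ${\rm E}[\mathrm{Dim}^+]$ on $G(n,p)$ --- remains open under your approach, as you yourself acknowledge.
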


\paragraph{}
By counting through all graphs, we got the following values
for $p=1/2$. 
For $n=1$ it is $1/2$, for $n=2$ it is $5/6$ for $n=3$, 
it is $35/32$ for $n=4$ it is $6593/5040$ for $n=5$ it is $18890551/12673024$.

\paragraph{}
We would have to be able to compute ${\rm E_p}[ \log(f(t)) ]$ as a function of $p$ and $t$
and then differentiate at $t=1$ to get the average simplex cardinality. While we know the
expectation of the simplex generating function $f(t)$ itself, we do not have formulas for 
moments ${\rm E}_p[ f(t)^k]$. 

\paragraph{}
{\bf (D)} Among all graphs with $n$ vertices, which one maximizes
$$  \delta(G) = {\rm Dim}^+(G) -{\rm dim}^+(G)/2  \; ? $$
We know it is the cyclic graph $C_4 = K_{2,2} = 2 \oplus 2$ in the case $n=4$ where
it the difference is $1/3$ and the 
bipartite graph $K_{3,3} = 3 \oplus 3$ (utility graph) in the case $n=6$,
where the difference is $1/2$. As $f_{K(n,n)}(t) = 1+ 2n t + n^2 t^2$ we 
have ${\rm Dim}^+(K_{n,n}) = (2n+2n^2)/(1+2n+n^2) = 2n/(1+n)$ and 
$\delta(K_{n,n}) = (n-1)/(n+1)$. The bipartite graphs $K_{n,n}$ can not be maximal
in general as the maximum goes to infinity for $n \to \infty$ and 
$\delta(K_{n,n})<1$. 

\begin{conj}
For which graphs with $n$ vertices is $\delta(G)$ maximal?
\end{conj}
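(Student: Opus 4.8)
The plan is to exploit the one structural fact that makes $\delta$ tractable. Since ${\rm Dim}^+$ is additive under the join (Proposition above) and ${\rm dim}^+$ is additive under the join (Betre--Salinger), the difference $\delta(G)={\rm Dim}^+(G)-{\rm dim}^+(G)/2$ is itself additive, $\delta(G\oplus H)=\delta(G)+\delta(H)$, and the vertex count is additive too. Recall that a graph is a nontrivial Zykov join exactly when its complement is disconnected; hence every $\Gamma$ has a unique factorization $\Gamma=\Gamma_1\oplus\cdots\oplus\Gamma_m$ into \emph{join-irreducible} pieces, one for each connected component of the complement. Writing $M(k)=\max\{\delta(H): H \text{ join-irreducible}, |V(H)|=k\}$, the maximization over all $n$-vertex graphs collapses to the integer optimization
\[ \max_{|V(G)|=n}\delta(G)=\max\Big\{\textstyle\sum_i M(k_i): \sum_i k_i=n\Big\}. \]

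First I would pin down the candidate extremizers. The edgeless graph $E_k$ is join-irreducible (its complement is $K_k$) and $\delta(E_k)=(k-1)/(2(k+1))$, so the per-vertex ratio $\delta(E_k)/k=(k-1)/(2k(k+1))$ equals exactly $1/12$ at $k=2$ and $k=3$ and is strictly smaller for every other $k\ge 1$. This identifies the conjectural answer: for $n\ge2$ the maximum is $\delta(G)=n/12$, attained by exactly the complete multipartite graphs all of whose parts have size $2$ or $3$ --- consistent with $K_{2,2}=C_4$ at $n=4$, with both $K_{3,3}$ and the octahedron $K_{2,2,2}$ at $n=6$, and with $K_{2,3}$ giving $5/12$ at $n=5$. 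The combinatorial step is then routine: since $\delta(E_2)=2/12$ and $\delta(E_3)=3/12$, any partition of $n$ into $2$'s and $3$'s yields exactly $n/12$, while a block of size $1$ or $\ge 4$ is strictly wasteful.

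The whole difficulty therefore concentrates in a single inequality: for every join-irreducible $H$ on $k$ vertices, $\delta(H)\le k/12$, with equality only for $H=E_2$ and $H=E_3$. Granting this, the theorem follows by induction on $n$: in the reducible case $G=G_1\oplus G_2$ the inductive hypothesis gives $\delta(G)=\delta(G_1)+\delta(G_2)\le n_1/12+n_2/12=n/12$, and the equality analysis forces every irreducible factor to be $E_2$ or $E_3$, whereas the irreducible case is exactly the claimed block inequality.

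The hard part will be this per-block upper bound, and it is hard for a precise reason: all the machinery developed above (Theorem~\ref{dimensioninquality} and its Lemma, the conditional-expectation estimate ${\rm Dim}^+(G)\ge 1/2+{\rm E}_G[{\rm Dim}^+]$) produces \emph{lower} bounds on $\delta$ --- indeed the same averaging yields $\delta(G)\ge {\rm E}_G[\delta]$, since ${\rm dim}^+(G)=1+{\rm E}_G[{\rm dim}^+]$ --- whereas here I need an \emph{upper} bound, precisely on the join-irreducible graphs where additivity is unavailable. The route I would try is to bound ${\rm Dim}^+(H)=f_H'(1)/f_H(1)$ from above directly from the $f$-vector under the Kruskal--Katona constraints, which cap how many high-dimensional simplices an irreducible $k$-vertex complex can carry, while lower-bounding ${\rm dim}^+(H)$ through its recursive unit-sphere definition; the gap $\delta(H)$ should then turn out largest exactly when $H$ carries no simplices above dimension $1$, i.e. when it degenerates to $E_2$ or $E_3$. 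A complementary line of attack is an edge-compression (shifting) argument showing that inserting an edge into an irreducible graph never increases $\delta$ --- the computation $\delta(E_k)>\delta(E_k+e)$ already exhibits this --- which would reduce the extremal problem to shifted complexes where both $f'(1)/f(1)$ and ${\rm dim}^+$ are explicitly computable.
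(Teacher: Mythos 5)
You should first be clear about the status of the target statement: in the paper it is posed as an open \emph{Question}, supported only by data ($C_4$ at $n=4$, $K_{3,3}$ at $n=6$, and the observation that $K_{n,n}$ cannot stay maximal since $\delta(K_{n,n})=(n-1)/(n+1)<1$ while the maximum grows). So there is no proof in the paper to compare against, and your attempt must stand on its own. Its structural skeleton is correct and genuinely useful: $\delta$ is join-additive (the paper's Proposition gives additivity of ${\rm Dim}^+$, Betre--Salinger gives additivity of ${\rm dim}^+$), a graph is a nontrivial Zykov join exactly when its complement is disconnected, so the problem does collapse to an optimization over join-irreducible blocks, and your arithmetic is right: $\delta(E_k)=(k-1)/(2(k+1))$, per-vertex ratio maximized exactly at $k\in\{2,3\}$ with value $1/12$. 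Your conjectured answer ($\max\delta=n/12$ for $n\geq 2$, attained by complete multipartite graphs with all parts of size $2$ or $3$) is consistent with every data point in the paper, and it even sharpens the paper's $n=6$ claim: the octahedron $K_{2,2,2}$ has ${\rm Dim}^+=2$, ${\rm dim}^+=3$, hence $\delta=1/2$, tying $K_{3,3}$, so the caption's ``the one with maximal $\delta$'' should read ``one of''.

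However, this is a well-organized conjecture, not a proof, and the gap is exactly where you located it --- but one of your two proposed repairs is demonstrably broken. The whole question is compressed into the unproven inequality $\delta(H)\leq k/12$ for every join-irreducible $H$ on $k$ vertices (equality only for $E_2,E_3$), and your edge-compression heuristic, ``inserting an edge into an irreducible graph never increases $\delta$,'' is false as stated: $P_4$ is join-irreducible (its complement is again $P_4$), and inserting the edge between its endpoints produces $C_4$ with $\delta(C_4)=1/3>1/4=\delta(P_4)$. Edge insertion can strictly increase $\delta$ precisely when it creates join-reducibility, so any shifting argument would have to be organized around the irreducible/reducible boundary rather than monotonically along it; the isolated computation $\delta(E_k)>\delta(E_k+e)$ does not indicate the general behavior. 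The Kruskal--Katona route is also only a direction: it can cap $f'(1)/f(1)$ given the $f$-vector, but ${\rm dim}^+$ is not a function of the $f$-vector, so you would additionally need a lower bound for ${\rm dim}^+$ uniform over all irreducible complexes with a given $f$-vector, which is the same kind of recursive-averaging difficulty the paper's Theorem~\ref{dimensioninquality} handles only in the one direction that gives \emph{lower} bounds on $\delta$. In short: correct reduction, plausible and data-consistent extremal conjecture, but the central per-block upper bound remains open and one proposed path to it fails.
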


\paragraph{}
{\bf (E)} Having looked at the average cardinality $m(G)={\rm Dim}^+(G)$ , 
one can wonder about the {\bf variance} ${\rm Var}^+(G) = \sum_{x \in G} (|x|-m(G))^2/|G^+|$ 
and {\bf higher moments} $\sum_{x \in G} (|x|-m(G))^k/|G^+|$ and their 
behavior in the Barycentric limit or its typical value on Erd\"os-R\'enyi
spaces. 

\begin{conj}
Are there a limiting laws for the moments of the simplex cardinality? 
\end{conj}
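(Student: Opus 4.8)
The plan is to answer the Barycentric half of the question affirmatively and in closed form, and to isolate the Erd\"os--R\'enyi half as the genuinely open part. The central observation is that for a complex $G$ of maximal dimension $d$, the simplex cardinality is a random variable taking values in the \emph{fixed finite set} $\{1,2,\dots,d+1\}$, and its law is exactly the normalized $f$-vector: the distribution $\mu_G$ placing mass $v_k/|G|$ on the value $k+1$. Every raw moment of the cardinality is then a \emph{linear} functional of the probability vector $\mu_G$, namely $\sum_{k=0}^{d} \mu_G(k+1)\,(k+1)^p$, and each central moment used by the author is a polynomial combination of these raw moments with the mean. So the whole question reduces to the limiting behavior of the probability vector $f_{G_n}/|f_{G_n}|_1$.

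First I would invoke the Barycentric limit theorem: since $A_d$ acts as a contraction on the simplex of probability directions with unique fixed point the Perron--Frobenius eigenvector $f_d$, we have $f_{G_n}/|f_{G_n}|_1 \to f_d/|f_d|_1 =: \mu_\infty$ for every starting complex $G_0$ of maximal dimension $d$. Because the value space is finite and fixed (refinement never raises the maximal dimension), convergence of the probability vector immediately gives convergence of every moment, each being a continuous functional of $\mu_G$, so
$$ \lim_{n\to\infty} \sum_{k=0}^d \mu_{G_n}(k+1)\,(k+1)^p = \sum_{k=0}^d \mu_\infty(k+1)\,(k+1)^p \; . $$
Since the mean converges to $C_d$, the central moments (polynomials in the raw moments and the mean) converge as well. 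The only bookkeeping subtlety is the author's normalization by $|G^+|=|G|+1$ rather than $|G|$, together with the inclusion of the empty set in the denominator of ${\rm Var}^+$ but not in its numerator sum; as $|G_n|\to\infty$ the ratio $|G_n|/(|G_n|+1)\to 1$, so the $|G^+|$-normalized moments share the same limits, with an explicit correction term that is asymptotically invisible.

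This yields the explicit limiting laws. Writing $C_d$ for the limiting mean, the limiting $p$-th central moment is
$$ M_p(d) = \sum_{k=0}^d \frac{(f_d)_k}{|f_d|_1}\,\bigl((k+1)-C_d\bigr)^p \; , $$
so in particular the limiting variance is $M_2(d)$ and all higher moments are determined by the single vector $f_d$. Equivalently, the law of the cardinality converges weakly to the fixed discrete measure $\mu_\infty$, which is the sharpest "limiting law" available here; no rescaling produces a nondegenerate continuous limit, because the support stays bounded by $d+1$. This matches Figure~(\ref{eigenvector}), which already displays $\mu_\infty$ for $d=100$ as a sharply peaked distribution near $0.72(d+1)$.

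The hard part is the Erd\"os--R\'enyi half. There the maximal dimension is itself random and grows with $n$, so the value space is not fixed and the clean linear-functional argument collapses. One would need not merely the known expectation of the simplex generating function $f(t)$ but the full suite of mixed moments ${\rm E}_p[f(t)^k]$ --- exactly the quantities flagged as unavailable in the discussion of question (D) --- in order to control ${\rm E}_p[\log f(t)]$ and its $t$-derivatives. I expect the right route is a concentration argument showing that on $G(n,p)$ the $f$-vector concentrates around a deterministic limiting shape, after which a limiting law for the cardinality could be read off from that shape; but producing the shape, and proving concentration sharp enough to pass to the moments, is the genuine obstacle and is what I would leave open.
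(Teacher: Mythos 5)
This item is one of the paper's open \emph{Questions}, so there is no proof in the paper to compare against --- only surrounding remarks (the variance values $v_n={\rm Var}^+(K_n)$, the sentence invoking the limiting Perron--Frobenius distribution, and the flagged unavailability of the mixed moments ${\rm E}_p[f(t)^k]$ on $G(n,p)$). Your proposal does not overreach: it resolves the Barycentric half and leaves the Erd\"os--R\'enyi half open, which is exactly the split the paper's own remarks suggest. The Barycentric argument is sound, and it is essentially the formalization of what the paper gestures at: Barycentric refinement preserves the maximal dimension, so the cardinality takes values in the fixed finite set $\{1,\dots,d+1\}$; the paper's Barycentric limit theorem gives $f_{G_n}/|f_{G_n}|_1 \to f_d/|f_d|_1$; and on a fixed finite support every raw moment is a linear (hence continuous) functional of the probability vector, so all moments converge and the limiting law is the discrete measure $\mu_\infty$ itself, with no nondegenerate rescaled limit possible. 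Your handling of the $|G|+1$ versus $|G|$ normalization is correct since $|G_n|\to\infty$. One caveat worth stating explicitly: your conclusion that ${\rm Var}^+(G_n)$ \emph{converges} superficially contradicts the paper's sentence that ``the variance of the simplex cardinality grows exponentially under Barycentric refinements''; that sentence is only consistent if read as referring to the unnormalized sum $\sum_{x}(|x|-m)^2$, which grows like $|G_n|\sim ((d+1)!)^n$, and the paper's very next clause (invoking the limiting Perron--Frobenius distribution of Figure~(\ref{eigenvector})) supports your reading that the normalized moments converge. If you want full rigor independent of the paper's asserted ``contraction'' property, note that $A_d$ is upper triangular with distinct eigenvalues $k!$, that the eigenvectors for the subdominant eigenvalues have vanishing last component, and that any complex of maximal dimension $d$ has $v_d>0$, so the expansion of $f_{G_0}$ has a positive coefficient on the top eigenvector, which then dominates under iteration; this closes the one step you take on faith from the paper. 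Your assessment of the Erd\"os--R\'enyi half --- that the random, growing maximal dimension breaks the fixed-support argument and that one would need the moments ${\rm E}_p[f(t)^k]$ or a concentration-of-$f$-vector result --- matches the obstacle the paper itself identifies under question (D) and is correctly left open.
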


\paragraph{}
For complete complexes $K_n$, we computed the variance $v_n = {\rm Var}^+(K_n)$
and got the values $v_1=1/8,v_2=1/4,v_3=15/32,v_4=3/4,v_5=135/128$. 
For example, for $G=K_1=\{ \{1\} \}$, we have ${\rm Dim}^+(G)=1/2$ and
${\rm Var}^+(G) = (1-1/2)^2/2 = 1/8$. The sequence $v_n$ grows
asymptotically linearly as the variance of
the simplex cardinality grows exponentially under Barycentric refinements. 
It must do so as there is a limiting distribution given by the Perron-Frobenius
eigenfunction of the Barycentric refinement operator shown in Figure~(\ref{eigenvector})
in the case $d=100$. 

\section{Illustrations}

\begin{figure}[!htpb]
\scalebox{0.82}{\includegraphics{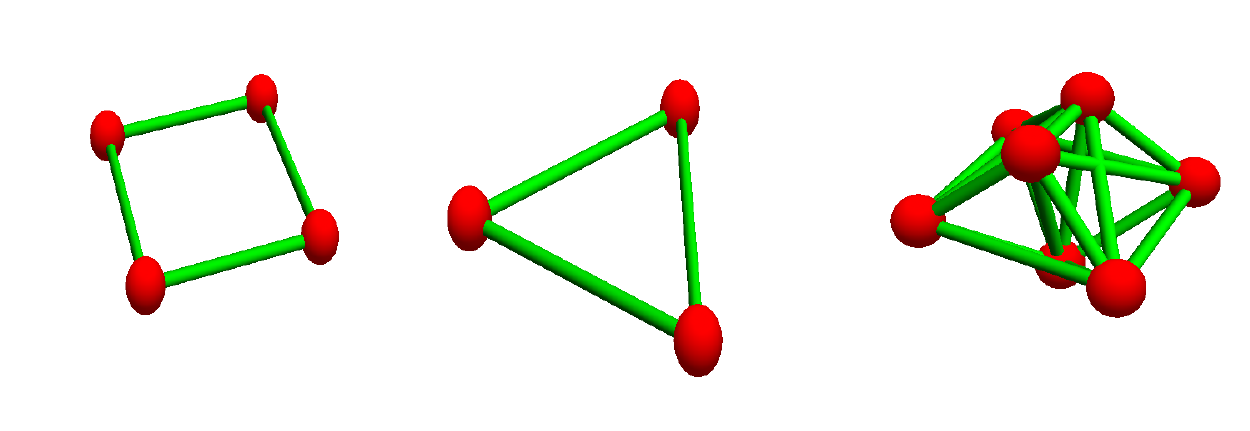}}
\label{example1}
\caption{
The cyclic graph $C_4$ has average simplex cardinality $(4+4*2)/9=4/3$. 
The complete graph $K_3$ has average simplex cardinality $(3+3*2+1*3)/8=3/2$.
The join $C_4 \oplus K_3$ is 4-dimensional with f-vector $(7, 19, 25, 16, 4)$ 
and average simplex cardinality $(7+19*2+25*3+16*4+4*5)/(1+7+19+25+16+4) =17/6$.
The example illustrates the compatibility with join: $4/3+3/2=17/6$. 
}
\end{figure}

\begin{figure}[!htpb]
\scalebox{0.52}{\includegraphics{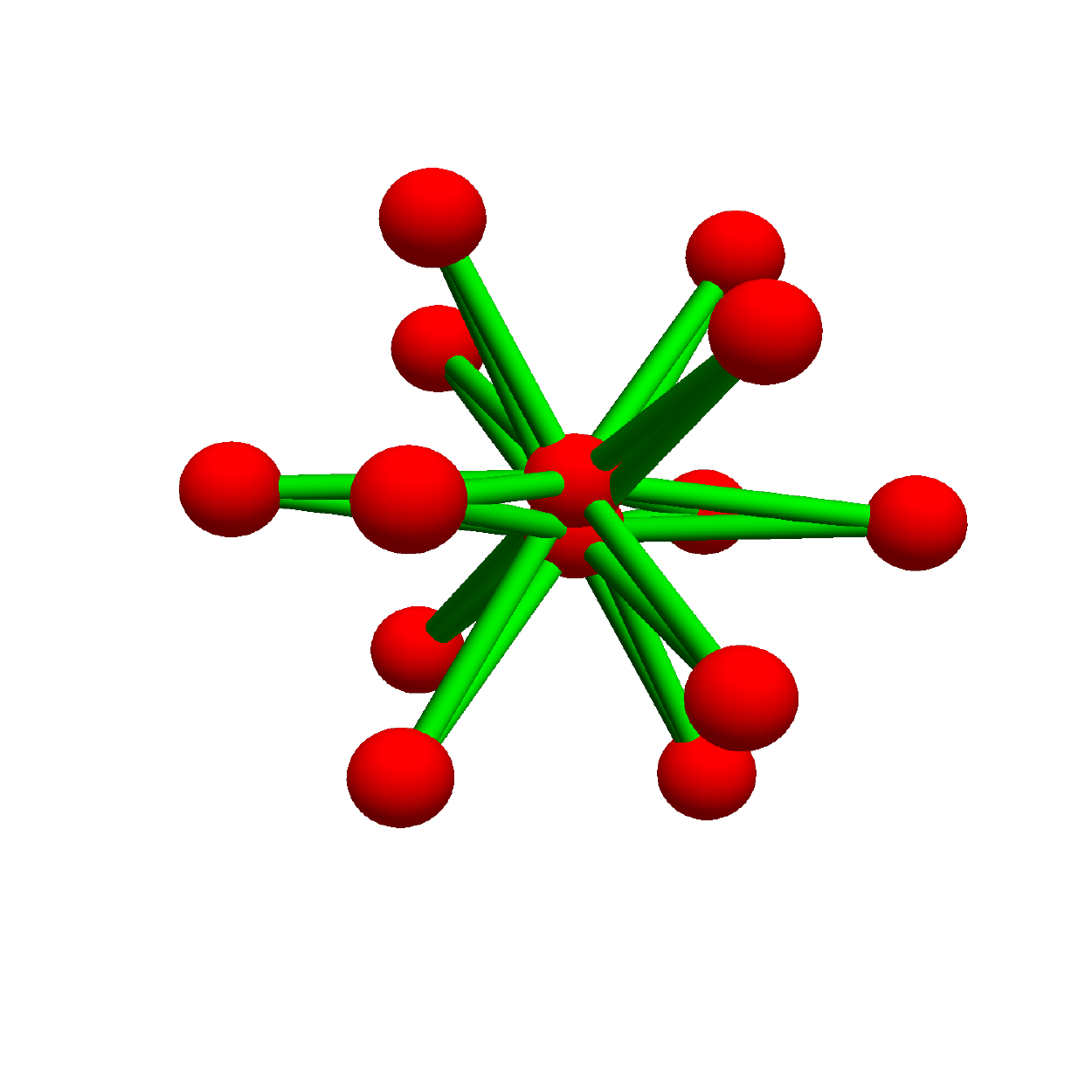}}
\label{example2}
\caption{
The join of two point graphs $12$ and $2$ gives the bipartite
graph $K_{12,2}$. We have ${\rm dim}^+(12)={\rm dim}^+(2)=1$ and 
${\rm dim}^+(K_{12,2})=2$. Also ${\rm Dim}^+(12)=12/13, {\rm Dim}^+(2)=2/3$ and
${\rm Dim}^+(K_{12,2})=62/39$. This illustrates the additivity of ${\rm dim}^+$ 
and ${\rm Dim}^+$. 
}
\end{figure}

\begin{figure}[!htpb]
\scalebox{0.35}{\includegraphics{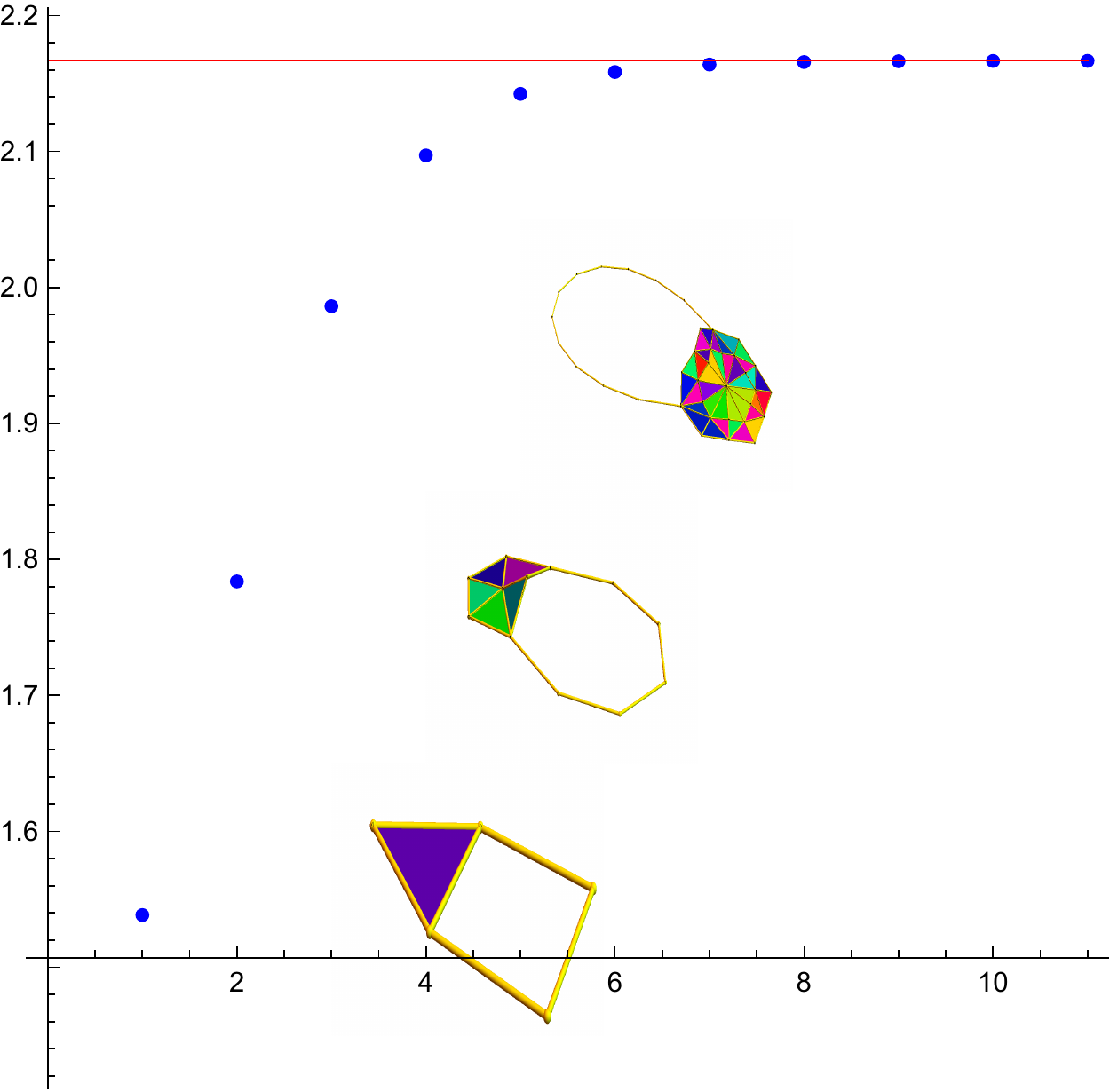}}
\scalebox{0.35}{\includegraphics{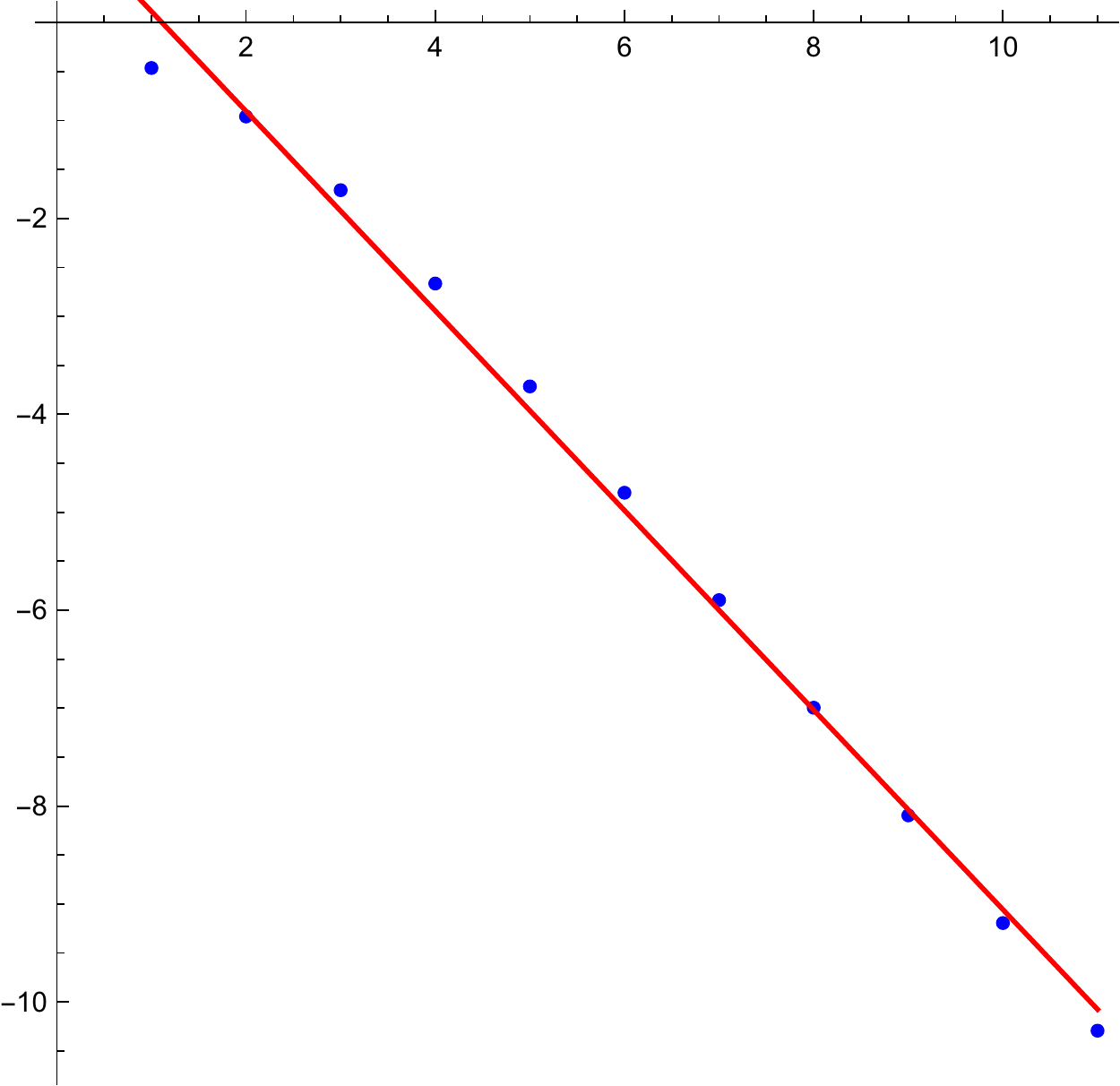}}
\label{housegraph}
\caption{
We see the average simplex density for the house graph and 
the first Barycentric refinements. The upper bound is $c=13/6 = (1,3,2) \cdot (1,2,3)/(1+3+2)$
determined by the Perron-Frobenius eigenvector $(1,3,2)$ of the Barycentric refinement operator $A$.
The right picture shows $\log( {\rm Dim}^+(G_n) - c)$. It is a monotone sequence if 
${\rm Dim}^+(G_1)  \geq {\rm Dim}^+(G)$ holds. Verifying this inequality would be 
a linear algebra problem. Does the Kruskal-Katona constraint for $f$-vectors imply that 
$(A f) \cdot (1, \cdots, d)/((A f) \cdot (1,\dots,1))$
is larger or equal than $f \cdot (1, \cdots, d)/(f \cdot (1,\cdots,1))$?
}
\end{figure}

\begin{figure}[!htpb]
\scalebox{0.42}{\includegraphics{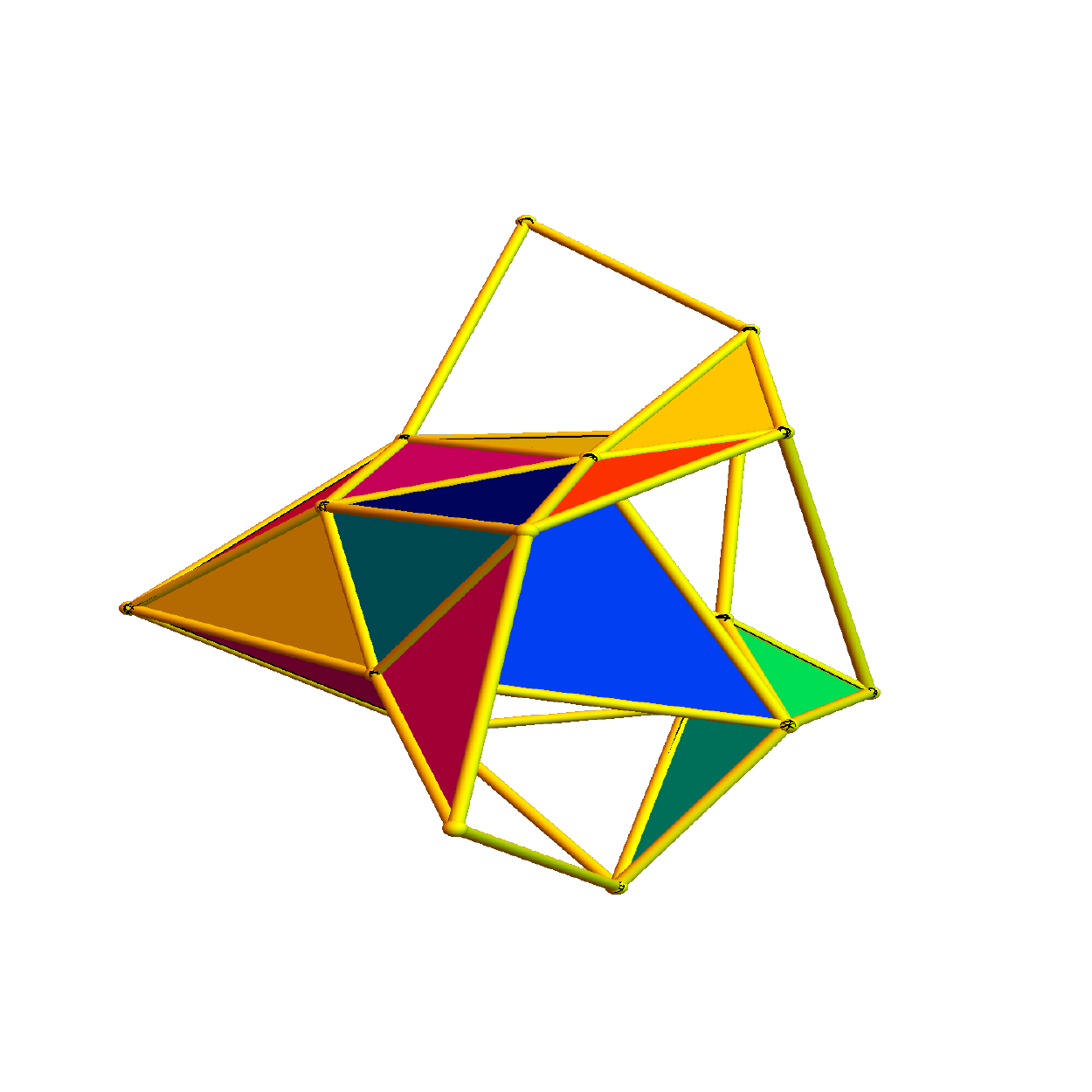}}
\label{example}
\caption{
A graph with f-vector $(15, 36, 16, 1)$. Its inductive dimension is
$8587/4725= 1.817...$. Its average simplex cardinality is $139/69=2.01$.
The augmented volume is $f(1)=1+15+36+16+1=69$. The derivative $f'$ of the
simplex generating function is $f'(t)=15 + 72t + 48t^2 + 4t^3$ which 
satisfies $f'(1)=139$. The inequality $1.4087... = (1+{\rm dim}(G))/2 \leq {\rm Dim}^+(G) = 2.01..$.
is satisfied. The Barycentric refinement $G_1$ satisfies 
${\rm dim}(G_1)=131002727/65345280=2.0048...$. 
It is an old theorem which assures ${\rm dim}(G_1) \geq {\rm dim}(G)$. 
We have ${\rm Dim}^+(G_1) = 84/37=2.270...$ which is larger than 
${\rm Dim}^+(G) = 139/69=2.014...$. It is one of the open problems to prove this. 
}
\end{figure}

\begin{figure}[!htpb]
\scalebox{0.52}{\includegraphics{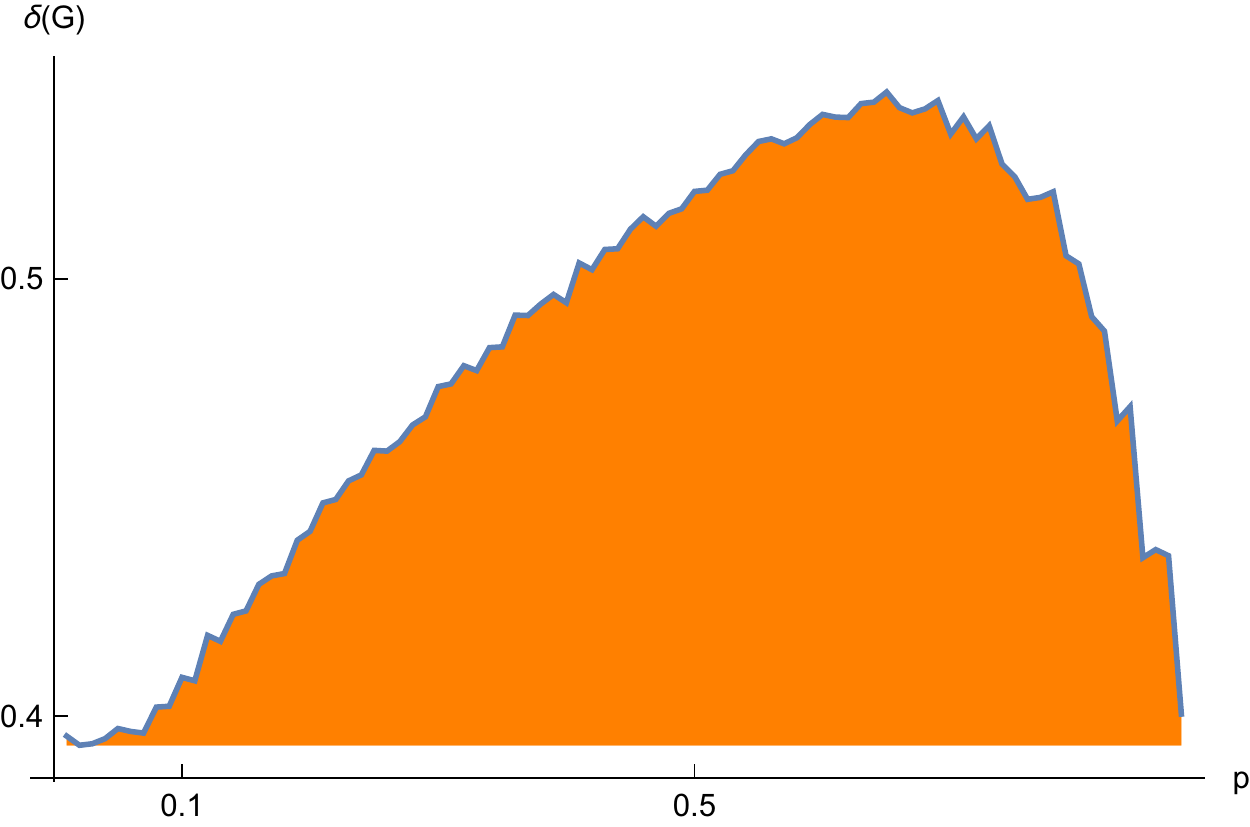}}
\label{dimensiondifference}
\caption{
We see $\delta(G) = {\rm Dim}^+(G) -{\rm dim}^+(G)/2$ which according to the theorem
is positive. Every point is an average over 1000 Erd\"os-Renyi graphs \cite{erdoesrenyi59} 
with 10 vertices
and edge probability $p$. The difference $\delta(G)$ is quite significant 
but for $p=1$, where we have the complete graph, it drops down to $0$. 
For $p=0$, where ${\rm Dim}^+(G)=n/(n+1)$ and ${\rm dim}^+(G)=1$ the difference is
$n/(1+n)-1/2$ which is in the case $n=10$ equal to $9/22$. 
}
\end{figure}

\begin{figure}[!htpb]
\scalebox{0.42}{\includegraphics{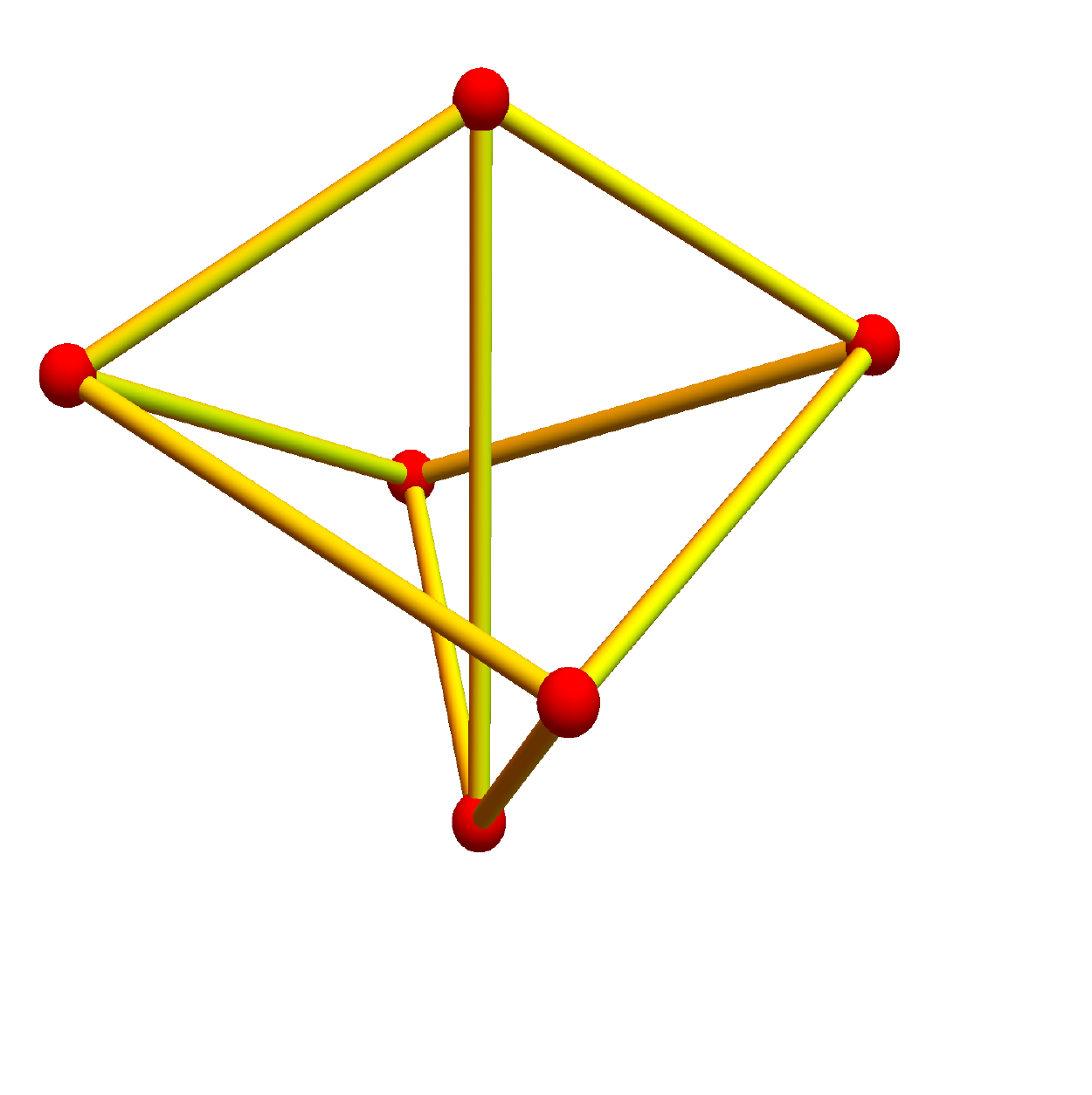}}
\scalebox{0.42}{\includegraphics{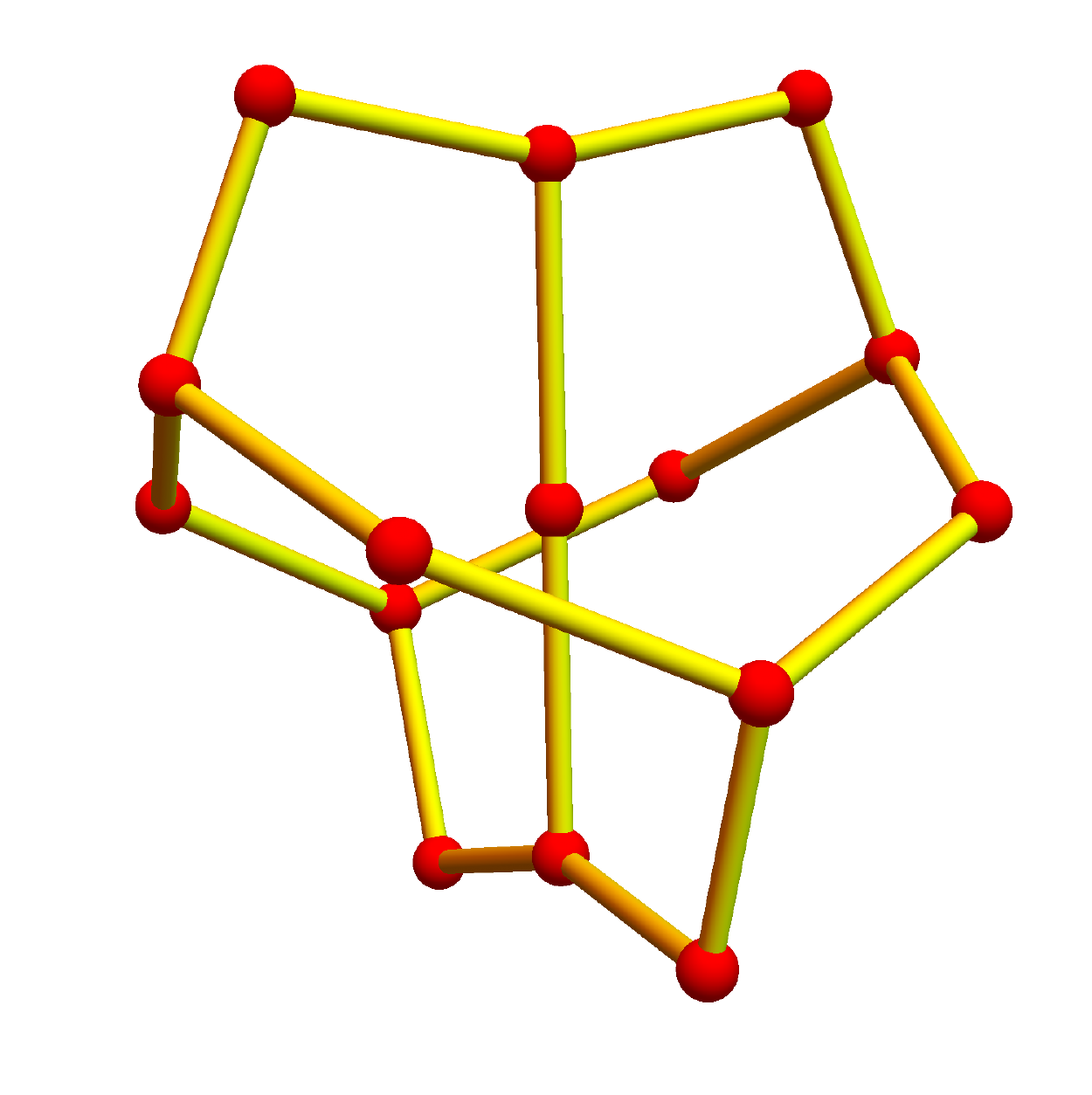}}
\label{maximaldimension}
\caption{
Among all connected graphs with $6$ vertices, the {\bf utility graph} $K_{3,3}$ is the 
one with maximal $\delta(G) = {\rm Dim}^+(G) -{\rm dim}^+(G)/2$. 
It is a $3$-regular graph of diameter $2$ which can not be embedded  of the complete
bipartite graph $K_{3,3} = 3 \oplus 3$ in $\mathbb{R}^2$. 
The picture shows a natural embedding in $\mathbb{R}^3$. In the case $n=2$, the extreme
case was $C_4 = 2 \oplus 2$. To the right, we see the Barycentric refinement, which
has the same $\delta(G)$. The case $\delta(G_1)=\delta(G)$ can not hold any more
in higher dimensions.
}
\end{figure}

\paragraph{}
The figures were generated in Mathematica. We give some code in the next section. 
As usual, the code can be copy-pasted from the ArXiv posting of this paper. 
The Wolfram language also serves as good pseudo code
which could with ease be translated into any other programming language. 

\section{Code}

\paragraph{}
The following Mathematica code implements the computation of the 
augmented inductive dimension and the augmented average simplex dimension
and computes it for some random simplicial complex. 
It then compares the two numbers 
${\rm dim}^+(G)/2$ and ${\rm Dim}^+(G)$ which appear in the inequality. 

\begin{small} 
\lstset{language=Mathematica} \lstset{frameround=fttt}
\begin{lstlisting}[frame=single]
Generate[A_]:=Delete[Union[Sort[Flatten[Map[Subsets,A],1]]],1]
R[n_,m_]:=Module[{A={},X=Range[n],k},Do[k:=1+Random[Integer,n-1];
  A=Append[A,Union[RandomChoice[X,k]]],{m}];Generate[A]];
Fvector[G_]:=Delete[BinCounts[Map[Length,G]-1],1];
Vol[G_]:=Total[Fvector[G]]+1;
f[G_]:=Module[{fv=Fvector[G]},1+Sum[ff[[k]]*t^k,{k,Length[ff]}]];
Dim[G_]:=Range[Length[Fvector[G]]].Fvector[G]/Vol[G];
Var[G_]:=Module[{m=Dim[G],v=Fvector[G],V=Vol[G]},
   v.Table[(k-m)^2,{k,Length[v]}]/V];
sphere[H_,x_]:=Module[{U={},n=Length[H]},Do[
  If[Sort[H[[k]]]!=Sort[x] && (SubsetQ[x,H[[k]]] 
      || SubsetQ[H[[k]],x]),U=Append[U,H[[k]]]],
  {k,Length[H]}]; Table[Complement[U[[k]],{x}],{k,Length[U]}]];
dim[H_,x_]:=Module[{S},If[Length[H]==0,S={},S=sphere[H,x]]; 
  If[Max[Map[Length,H]]<=1,0,If[S=={},0,dim[S]]]];
dim[H_]:=Module[{n=Length[H],U},
  If[n==0,U={},U=Table[sphere[H,H[[k]]],{k,n}]];
  If[Length[U]==0,-1,Sum[1+dim[U[[k]]],{k,n}]/n]];
JoinComplex[G_,H_]:=Module[{HH=H+Max[G]+1,K}, K=Union[G,HH];
  Do[Do[K=Append[K,Union[G[[k]],HH[[l]]]],
  {k,Length[G]}],{l,Length[HH]}];K];

Do[G=R[6,6];Print[N[{(dim[G]+1)/2,Dim[G]}]],{n,5}]
CompleteComplex[n_]:=Generate[{Range[n]}];
Do[G=CompleteComplex[n];Print[{(dim[G]+1)/2,Dim[G]}],{n,2,5}]
CycleComplex[n_]:=Table[{k,Mod[k,n]+1},{k,n}];
Do[G=CycleComplex[n];Print[{(dim[G]+1)/2,Dim[G]}],{n,3,6}]

House =Generate[{{2,3,5},{1,4},{1,2},{3,4}}]; 
Rabbit=Generate[{{1,2,3},{3,4},{3,5}}];
RabbitHouse=JoinComplex[Rabbit,House];
{Dim[House],Dim[Rabbit],Dim[RabbitHouse]} 
{dim[House]+1,dim[Rabbit]+1,dim[RabbitHouse]+1} (*Patience!*) 
\end{lstlisting}
\end{small}

\paragraph{}
Note that in the just done computation of the inductive dimensions of the house graph
and rabbit graphs, we compute directly the dimensions of the Barycentric
refinements (the Whitney complexes of the graphs) because we average over
the simplices. The just given
code gives for the average cardinalities ${\rm Dim}^+({\rm House}) = 20/13$,
${\rm Dim}^+({\rm Rabbit})= 3/2$ and ${\rm Dim}^+({\rm RabbitHouse})=79/26$ and for the 
inductive dimensions ${\rm dim}^+({\rm House})=61/24, {\rm dim}^+({\rm Rabbit})=13/5$, 
and ${\rm dim}^+({\rm RabbitHouse})=617/120$. 
In Figure~(\ref{betresalinger}), we have noted the dimensions of the graphs themselves.
The code for that is open source and given for example in \cite{KnillWolframDemo1}.

\paragraph{}
We compute now the constants $C_d$, then the limiting value $C_d/d = 0.72...$
which for $d=500$ is $0.722733$. The limiting value is a real number which 
we can not yet identify. Is $\lim_{d \to \infty} C_d/d$ rational or irrational 
for example? It it algebraic or transcendental.

\begin{small} 
\lstset{language=Mathematica} \lstset{frameround=fttt}
\begin{lstlisting}[frame=single]
A[n_]:=Table[StirlingS2[j,i]*i!,{i,n+1},{j,n+1}]; 
EV=Eigenvectors;
c[k_] :=Module[{v=First[EV[    A[k]]]},v.Range[k+1]/Total[v]];
cN[k_]:=Module[{v=First[EV[1.0*A[k]]]},v.Range[k+1]/Total[v]];

ListOfConstants      = Table[c[d],{d,0,10}]
LimitingValue        = cN[500]/500  
DimensionIntervals   = Table[N[{(d+1)/2,cN[d],d+1}],{d,0,10}]
\end{lstlisting}
\end{small}

\vfill

\bibliographystyle{plain}

\end{document}